\newtheorem{theorem}{Theorem}
\theoremstyle{plain}
\newtheorem{corollary}{Corollary}
\newtheorem{definition}{Definition}
\newtheorem{proposition}{Proposition}
\newtheorem{remark}{Remark}
\numberwithin{equation}{section}
\begin{document}
\title[On Some Integral Inequalities via $h-$Convexity]{On Some Integral
Inequalities via $h-$Convexity}
\author{Mevl\"{u}t TUN\c{C}}
\address{Kilis 7 Aral\i k University, Faculty of Art and Sciences,
Department of Mathematics, Kilis, 79000, Turkey.}
\email{mevluttunc@kilis.edu.tr}
\date{January 06, 2012}
\subjclass[2000]{Primary 26D15 ; Secondary 26A51}
\keywords{Hadamard's inequality, h-convex, supermultiplicative, 
superadditive, similarly ordered. }

\begin{abstract}
In this paper, we establish some new inequalities for class of $SX\left(
h,I\right) $ convex functions which are supermultiplicative or superadditive
and nonnegative. And we also give some applications for special means.
\end{abstract}

\maketitle

\section{Introduction}

The following definition is well known in the literature [\ref{mit2}]: A
function $f:I\rightarrow 
\mathbb{R}
,\emptyset \neq I\subseteq 
\mathbb{R}
,$ is said to be convex on I if inequality

\begin{equation}
f\left( tx+\left( 1-t\right) y\right) \leq tf\left( x\right) +\left(
1-t\right) f\left( y\right)  \label{101}
\end{equation}%
holds for all $x,y\in I$ and $t\in \left[ 0,1\right] $. Geometrically, this
means that if $P,Q$ and $R$ are three distinct points on the graph of $f$
with $Q$ between $P$ and $R$, then $Q$ is on or below chord $PR$.

Let $f:I\subseteq 
\mathbb{R}
\rightarrow 
\mathbb{R}
$ be a convex function and $a,b\in I$ with $a<b$. The following double
inequality:$\ $

\begin{equation}
f\left( \frac{a+b}{2}\right) \leq \frac{1}{b-a}\int_{a}^{b}f\left( x\right)
dx\leq \frac{f\left( a\right) +f\left( b\right) }{2}  \label{102}
\end{equation}%
is known in the literature as Hadamard's inequality for convex function.
Keep in mind that some of the classical inequalities for means can come from
(\ref{102}) for convenient particular selections of the function $f$. If $f$
is concave, this double inequality hold in the inversed way.\bigskip\ 

The inequalities (\ref{102}) which have numerous uses in a variety of
settings, has been came a significant groundwork in mathematical analysis
and optimization. Many reports have provided new proof, extensions and
considering its refinements, generalizations, numerous interpolations and
applications, for example, in the theory of special means and information
theory. For some results on generalizations, extensions and applications of
the Hermite-Hadamard inequalities, see [\ref{dr1}, \ref{god}, \ref{hud}, \ref%
{var}-\ref{oz}].

\begin{definition}
\bigskip \textit{[\ref{god}] We say that }$f:I\rightarrow 
\mathbb{R}
$\textit{\ is Godunova-Levin function or that }$f$\textit{\ belongs to the
class }$Q\left( I\right) $\textit{\ if }$f$\textit{\ is non-negative and for
all }$x,y\in I$\textit{\ and }$t\in \left( 0,1\right) $\textit{\ we have \ \
\ \ \ \ \ \ \ \ \ \ \ }%
\begin{equation}
f\left( tx+\left( 1-t\right) y\right) \leq \frac{f\left( x\right) }{t}+\frac{%
f\left( y\right) }{1-t}.  \label{103}
\end{equation}
\end{definition}

\begin{definition}
\textit{[\ref{dr1}] We say that }$f:I\subseteq 
\mathbb{R}
\rightarrow 
\mathbb{R}
$\textit{\ is a }$P-$\textit{function or that }$f$\textit{\ belongs to the
class }$P\left( I\right) $\textit{\ if }$f$\textit{\ is nonnegative and for
all }$x,y\in I$\textit{\ and }$t\in \left[ 0,1\right] ,$\textit{\ we have}%
\begin{equation}
f\left( tx+\left( 1-t\right) y\right) \leq f\left( x\right) +f\left(
y\right) .  \label{104}
\end{equation}
\end{definition}

\begin{definition}
\textit{[\ref{hud}] Let }$s\in \left( 0,1\right] .$\textit{\ A function }$%
f:\left( 0,\infty \right] \rightarrow \left[ 0,\infty \right] $\textit{\ is
said to be }$s-$\textit{convex in the second sense if \ \ \ \ \ \ \ \ \ \ \
\ }%
\begin{equation}
f\left( tx+\left( 1-t\right) y\right) \leq t^{s}f\left( x\right) +\left(
1-t\right) ^{s}f\left( y\right) ,  \label{105}
\end{equation}%
\textit{for all }$x,y\in \left( 0,b\right] $\textit{\ \ and }$t\in \left[ 0,1%
\right] $\textit{. This class of }$\mathit{s-}$\textit{convex functions is
usually denoted by }$K_{s}^{2}$\textit{.}
\end{definition}

In 1978, Breckner introduced $s-$convex functions as a generalization of
convex functions in [\textit{\ref{bre1}}]. Also, in that work Breckner
proved the important fact that the set valued map is $s-$convex only if the
associated support function is $s-$convex function in [\textit{\ref{bre2}}].
A number of properties and connections with s-convex in the first sense are
discussed in paper [\textit{\ref{hud}}]. Of course, $s-$convexity means just
convexity when $s=1$.

\begin{definition}
\bigskip \textit{[\ref{var}] Let }$h:J\subseteq 
\mathbb{R}
\rightarrow 
\mathbb{R}
$\textit{\ be a positive function . We say that }$f:I\subseteq 
\mathbb{R}
\rightarrow 
\mathbb{R}
$\textit{\ is }$h-$\textit{convex function, or that }$f$\textit{\ belongs to
the class }$SX\left( h,I\right) $\textit{, if }$f$\textit{\ is nonnegative
and for all }$x,y\in I$\textit{\ and }$t\in \left[ 0,1\right] $\textit{\ we
have \ \ \ \ \ \ \ \ \ \ \ \ }%
\begin{equation}
f\left( tx+\left( 1-t\right) y\right) \leq h\left( t\right) f\left( x\right)
+h\left( 1-t\right) f\left( y\right) .  \label{106}
\end{equation}
\end{definition}

\bigskip If inequality (\ref{106}) is reversed, then $f$ is said to be $h-$%
concave, i.e. $f\in SV\left( h,I\right) $. Obviously, if $h\left( t\right)
=t $, then all nonnegative convex functions belong to $SX\left( h,I\right) $%
\ and all nonnegative concave functions belong to $SV\left( h,I\right) $; if 
$h\left( t\right) =\frac{1}{t}$, then $SX\left( h,I\right) =Q\left( I\right) 
$; if $h\left( t\right) =1$, then $SX\left( h,I\right) \supseteq P\left(
I\right) $; and if $h\left( t\right) =t^{s}$, where $s\in \left( 0,1\right) $%
, then $SX\left( h,I\right) \supseteq K_{s}^{2}$.

\begin{definition}
\bigskip\ [\textit{\ref{var}}]A function $h:J\rightarrow 
\mathbb{R}
$ is said to be a supermultiplicative function if%
\begin{equation}
h\left( xy\right) \geq h\left( x\right) h\left( y\right)  \label{107}
\end{equation}%
for all $x,y\in J.$
\end{definition}

If inequality (\ref{107}) is reversed, then $h$ is said to be a
submultiplicative function. If equality held in (\ref{107}), then $h$ is
said to be a multiplicative function.

\begin{definition}
\lbrack \textit{\ref{alzer}}]A function $h:J\rightarrow 
\mathbb{R}
$ is said to be a superadditive function if%
\begin{equation}
h\left( x+y\right) \geq h\left( x\right) +h\left( y\right)  \label{108}
\end{equation}%
for all $x,y\in J.$
\end{definition}

\begin{definition}
\lbrack \textit{\ref{skala}}] Two functions $h:X\rightarrow 
\mathbb{R}
$ and $g:X\rightarrow 
\mathbb{R}
$ are said to be similarly ordered, shortly $f$ s.o. $g$, if%
\begin{equation}
\left( f\left( x\right) -f\left( y\right) \right) \left( g\left( x\right)
-g\left( y\right) \right) \geq 0  \label{109}
\end{equation}%
for every $x,y\in X.$
\end{definition}

\begin{remark}
\lbrack \textit{\ref{var}}]Let $h$ be a non-negative function such that%
\begin{equation}
h\left( \alpha \right) \geq \alpha  \label{110}
\end{equation}%
for all $\alpha \in (0,1)$. For example, the function $h_{k}(x)=x^{k}$ where 
$k\leq 1$ and $x>0$ has that property. If $f$ is a non-negative convex
function on $I$ , then for $x,y\in I$ , $\alpha \in (0,1)$ we have 
\begin{equation}
f\left( \alpha x+(1-\alpha )y\right) \leq \alpha f(x)+(1-\alpha )f(y)\leq
h(\alpha )f(x)+h(1-\alpha )f(y).  \label{111}
\end{equation}%
So, $f\in SX(h,I)$. Similarly, if the function $h$ has the property: $%
h(\alpha )\leq \alpha $ for all $\alpha \in (0,1)$, then any non-negative
concave function $f$ belongs to the class $SV(h,I)$.
\end{remark}

\begin{proposition}
\lbrack \textit{\ref{var}}]Let $f$ and $g$ be a similarly ordered functions
on $I$ , i.e.%
\begin{equation}
\left( f(x)-f(y)\right) \left( g(x)-g(y)\right) \geq 0,  \label{112}
\end{equation}%
for all $x,y\in I$. If $f\in SX\left( h_{1},I\right) $, $g\in SX\left(
h_{2},I\right) $ and $h(\alpha )+h(1-\alpha )\leq c$ for all $\alpha \in
(0,1)$, where $h(t)=max\{h_{1}(t),h_{2}(t)\}$ and $c$ is a fixed positive
number, then the product $fg$ belongs to $SX(ch,I)$. If $f$ and $g$ are
oppositely ordered, $f\in SV(h_{1},I)$, $g\in SV(h_{2},I)$ and $h(\alpha
)+h(1-\alpha )\geq c$ for all $\alpha \in (0,1)$, where $h(t)=min%
\{h_{1}(t),h_{2}(t)\}$ and $c>0$, then the product $fg$ belongs to $SV(ch,I)$%
.
\end{proposition}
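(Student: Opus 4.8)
The plan is to multiply the two defining inequalities for $f$ and $g$ and then consolidate the resulting cross terms using the similar-ordering hypothesis. First I would fix $x,y\in I$ and $t\in[0,1]$ (the assertion is trivial at the endpoints, so one may take $t\in(0,1)$). Since $f\in SX(h_{1},I)$ and $g\in SX(h_{2},I)$ are nonnegative and $h_{1},h_{2}$ are positive, all four quantities $f(tx+(1-t)y)$, $g(tx+(1-t)y)$, $h_{1}(t)f(x)+h_{1}(1-t)f(y)$, $h_{2}(t)g(x)+h_{2}(1-t)g(y)$ are nonnegative, so I may multiply the inequalities
\begin{equation*}
f(tx+(1-t)y)\le h_{1}(t)f(x)+h_{1}(1-t)f(y),\qquad g(tx+(1-t)y)\le h_{2}(t)g(x)+h_{2}(1-t)g(y)
\end{equation*}
term by term to get an upper bound for $(fg)(tx+(1-t)y)$.

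Next I would expand the product on the right into the four nonnegative terms $h_{1}(t)h_{2}(t)f(x)g(x)$, $h_{1}(t)h_{2}(1-t)f(x)g(y)$, $h_{1}(1-t)h_{2}(t)f(y)g(x)$, $h_{1}(1-t)h_{2}(1-t)f(y)g(y)$. Using $h_{1}(s),h_{2}(s)\le h(s)=\max\{h_{1}(s),h_{2}(s)\}$ for $s\in\{t,1-t\}$ I bound the diagonal coefficients by $h(t)^{2}$ and $h(1-t)^{2}$ and each cross coefficient by $h(t)h(1-t)$. The similar-ordering inequality $(f(x)-f(y))(g(x)-g(y))\ge 0$ rearranges to $f(x)g(y)+f(y)g(x)\le f(x)g(x)+f(y)g(y)$, which lets me replace the sum of the two cross terms by $h(t)h(1-t)\bigl(f(x)g(x)+f(y)g(y)\bigr)$.

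Collecting the coefficient of $f(x)g(x)$ gives $h(t)^{2}+h(t)h(1-t)=h(t)\bigl(h(t)+h(1-t)\bigr)$, and similarly $h(1-t)\bigl(h(t)+h(1-t)\bigr)$ for $f(y)g(y)$, so the bound factors as $\bigl(h(t)+h(1-t)\bigr)\bigl(h(t)(fg)(x)+h(1-t)(fg)(y)\bigr)$; the hypothesis $h(\alpha)+h(1-\alpha)\le c$ then yields $(fg)(tx+(1-t)y)\le c\,h(t)(fg)(x)+c\,h(1-t)(fg)(y)$, i.e. $fg\in SX(ch,I)$. The oppositely ordered case is symmetric: the defining inequalities reverse, so multiplying them (still legitimate since all factors are nonnegative) gives a \emph{lower} bound; now $h(s)=\min\{h_{1}(s),h_{2}(s)\}$ bounds each coefficient from below, the opposite ordering gives $f(x)g(y)+f(y)g(x)\ge f(x)g(x)+f(y)g(y)$, and $h(\alpha)+h(1-\alpha)\ge c$ closes the estimate to give $fg\in SV(ch,I)$. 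I expect the only delicate point to be bookkeeping the direction of every inequality in the reversed case; there is no analytic difficulty, only the need to note that nonnegativity of $f$, $g$, $h_{1}$, $h_{2}$ is exactly what makes term-by-term multiplication preserve the inequality in each direction.
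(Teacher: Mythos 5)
The paper states this proposition only as a quoted result from the reference on $h$-convexity and supplies no proof of its own, so there is nothing internal to compare against. Your argument is correct and is the standard one (essentially the proof in the cited source): multiply the two defining inequalities term by term, which is legitimate because every factor is nonnegative; bound each coefficient by $h=\max\{h_{1},h_{2}\}$; absorb the cross terms $f(x)g(y)+f(y)g(x)\le f(x)g(x)+f(y)g(y)$ via similar ordering; and factor the result as $\left(h(t)+h(1-t)\right)\left(h(t)(fg)(x)+h(1-t)(fg)(y)\right)\le c\,h(t)(fg)(x)+c\,h(1-t)(fg)(y)$, with the oppositely ordered case obtained by reversing every step. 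The one inaccurate remark is the parenthetical claim that the endpoints $t\in\{0,1\}$ are ``trivial'' --- they are not derivable from the stated hypotheses, since $h(\alpha)+h(1-\alpha)\le c$ is only assumed on $(0,1)$; but they are also not needed, because the definition of $h$-convexity in the cited source (unlike the transcription in this paper, which writes $t\in[0,1]$) is itself restricted to $\alpha\in(0,1)$.
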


\bigskip

Up until now, there are many reports on the two convex function, two $s-$%
convex functions, two $m-$convex functions or on the product of the $s-$%
convex function with an ordinary convex function. And in this study, in
addition to its predecessors, the new inequalities on the product of classes
of $h-$convex function will be obtained by using the elementary analysis and
the applications in the special means for the obtained inequalities will be
provided. In this paper we will imply $M(a,b)=f(a)g(a)+f(b)g(b)$ and $%
N(a,b)=f(a)g(b)+f(b)g(a).$

\section{\protect\bigskip Main Results}

The following inequalities is well known in the literature; For $\mu \leq
\lambda $ and $\kappa \leq \varepsilon $ and $\mu ,\lambda ,\kappa
,\varepsilon \in 
\mathbb{R}
,$%
\begin{eqnarray}
\mu \varepsilon +\lambda \kappa &\leq &\mu \kappa +\lambda \varepsilon
\label{*} \\
\mu \kappa &\leq &\lambda \varepsilon  \label{**}
\end{eqnarray}%
The inequality (\ref{*}) is more useful than the inequality (\ref{**}). In
our proofs, the inequality (\ref{*}) will be used.

\begin{theorem}
\label{th1}\bigskip Let $f,g\in SX(h,I)$, $h$ is super-multiplicative and $%
f,g$ be similarly ordered functions on $I$ for all $x,y\in I\subseteq 
\mathbb{R}
,$ and $\left( f,g\right) \left( x\right) \geq x$ and $h\left( t\right) \geq
t$. Then for all $t\in \left[ 0,1\right] ,$ we have;%
\begin{eqnarray}
&&\frac{2a+3b}{6}\left( f\left( a\right) +g\left( a\right) \right) +\frac{%
3a+2b}{6}\left( f\left( b\right) +g\left( b\right) \right)  \notag \\
&\leq &\frac{1}{b-a}\int_{a}^{b}\left( fg\right) \left( x\right) dx+\left(
fg\right) \left( a\right) \int_{0}^{1}\left[ h\left( t\left( 1-t\right)
\right) +h\left( t^{2}\right) \right] dt  \notag \\
&&+\left( fg\right) \left( b\right) \int_{0}^{1}\left[ h\left( t\left(
1-t\right) \right) +h\left( \left( 1-t\right) ^{2}\right) \right] dt.
\label{201}
\end{eqnarray}
\end{theorem}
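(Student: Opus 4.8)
The plan is to start from the $h$-convexity of $f$ and $g$ applied at the standard Hermite--Hadamard substitution $x = ta + (1-t)b$, multiply the two resulting bounds, and then integrate in $t$ over $[0,1]$. Concretely, since $f,g \in SX(h,I)$ we have
\begin{eqnarray*}
f\left( ta+(1-t)b\right) &\leq& h(t)f(a)+h(1-t)f(b),\\
g\left( ta+(1-t)b\right) &\leq& h(t)g(a)+h(1-t)g(b).
\end{eqnarray*}
Because $f$ and $g$ are nonnegative, I can multiply these two inequalities; the left side becomes $(fg)\left( ta+(1-t)b\right)$ and the right side expands into four terms involving $h(t)^2$, $h(t)h(1-t)$ (twice), and $h(1-t)^2$, with coefficients $f(a)g(a)$, $f(a)g(b)$, $f(b)g(a)$, $f(b)g(b)$. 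Here the supermultiplicativity hypothesis $h(x)h(y) \le h(xy)$ enters: it lets me replace $h(t)^2 \le h(t^2)$, $h(t)h(1-t) \le h(t(1-t))$, and $h(1-t)^2 \le h((1-t)^2)$, so that after this step the right-hand side reads
\[
h(t^2)(fg)(a) + h(t(1-t))\,N(a,b) + h((1-t)^2)(fg)(b),
\]
where $N(a,b)=f(a)g(b)+f(b)g(a)$ as defined in the introduction.

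Next I would control $N(a,b)$. Since $f$ and $g$ are similarly ordered on $I$, taking $x=a$, $y=b$ in \eqref{109} gives $\left( f(a)-f(b)\right)\left( g(a)-g(b)\right) \ge 0$, which rearranges to $f(a)g(b)+f(b)g(a) \le f(a)g(a)+f(b)g(b)$, i.e. $N(a,b) \le M(a,b)$. Applying this to the cross term (whose coefficient $h(t(1-t))$ is nonnegative) and then splitting $M(a,b) = (fg)(a)+(fg)(b)$ appropriately between the $h(t(1-t))$ contributions, the bound becomes
\[
(fg)\left( ta+(1-t)b\right) \leq \left[ h(t(1-t)) + h(t^2)\right](fg)(a) + \left[ h(t(1-t)) + h((1-t)^2)\right](fg)(b).
\]
Now integrate both sides over $t \in [0,1]$. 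On the left, the substitution $x = ta+(1-t)b$ gives $\int_0^1 (fg)\left( ta+(1-t)b\right)\,dt = \frac{1}{b-a}\int_a^b (fg)(x)\,dx$. On the right, $(fg)(a)$ and $(fg)(b)$ come out of the integrals, producing exactly the two integral expressions $\int_0^1\left[ h(t(1-t))+h(t^2)\right]dt$ and $\int_0^1\left[ h(t(1-t))+h((1-t)^2)\right]dt$ in \eqref{201}.

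It remains to recover the left-hand side coefficients $\frac{2a+3b}{6}$ and $\frac{3a+2b}{6}$. This is where the hypotheses $(f,g)(x)\ge x$ and $h(t)\ge t$ — and the auxiliary inequality \eqref{*} — do their work, and I expect this bookkeeping to be the main obstacle. The idea is that $h(t)\ge t$ forces $f,g \in SX(h,I)$ to also satisfy the plain convex-type estimate, and combined with $(f,g)(x)\ge x$ one gets a pointwise lower bound on $f\left( ta+(1-t)b\right)+g\left( ta+(1-t)b\right)$, namely $\ge ta+(1-t)b$ supplemented by the convexity bound $\le h(t)\left( f(a)+g(a)\right)+h(1-t)\left( f(b)+g(b)\right)$; using \eqref{*} to pair the ordered quantities $\{t,1-t\}$ against $\{f(a)+g(a), f(b)+g(b)\}$ cleanly, integrating in $t$, and evaluating the elementary integrals $\int_0^1 t\,dt = \tfrac12$, $\int_0^1 t^2\,dt=\tfrac13$ (so that $\int_0^1 (ta+(1-t)b)\,dt = \tfrac{a+b}{2}$ and the weighted versions give the $\tfrac16$-denominators) yields the stated affine coefficients. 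Assembling the lower estimate for $\tfrac{1}{b-a}\int_a^b(f+g)$ coming from this last step with the upper estimate for $\tfrac{1}{b-a}\int_a^b (fg)$ from the previous paragraph, and rearranging, gives \eqref{201}.
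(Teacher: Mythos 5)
Your handling of the right-hand side is correct and coincides with the paper's step (\ref{204}): expand the product of the two $h$-convexity bounds, use supermultiplicativity to pass to $h(t^{2})$, $h(t(1-t))$, $h((1-t)^{2})$, and use similar ordering to replace $N(a,b)$ by $M(a,b)$. The gap is architectural. By multiplying the two inequalities $f(u)\leq h(t)f(a)+h(1-t)f(b)$ and $g(u)\leq h(t)g(a)+h(1-t)g(b)$ (here $u=ta+(1-t)b$) you are invoking the weak form (\ref{**}), and what you obtain after integrating is
\begin{equation*}
\frac{1}{b-a}\int_{a}^{b}(fg)(x)dx\leq (fg)(a)\int_{0}^{1}\left[ h(t(1-t))+h(t^{2})\right] dt+(fg)(b)\int_{0}^{1}\left[ h(t(1-t))+h((1-t)^{2})\right] dt,
\end{equation*}
an \emph{upper} bound on the integral, whereas in (\ref{201}) the integral must appear on the larger side, added to those same two terms. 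Your proposed repair, a separate lower estimate for $\frac{1}{b-a}\int_{a}^{b}(f+g)$ to be ``assembled'' with the above, cannot close this: $\int_{a}^{b}(f+g)$ does not occur in (\ref{201}), and the two estimates concern different quantities pointing in directions that do not combine. The only way your upper bound could feed into (\ref{201}) would be through the implication ``left side $\leq \frac{2}{b-a}\int_{a}^{b}fg$'', which is not a consequence of the hypotheses (take $f=g\equiv b$, $h(t)=t$ and let $a\rightarrow b$: the left side tends to $\frac{10}{3}b^{2}$ while $\frac{2}{b-a}\int_{a}^{b}fg$ tends to $2b^{2}$).

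The paper never multiplies the two bounds. It applies (\ref{*}) with $\mu =f(u)$, $\lambda =h(t)f(a)+h(1-t)f(b)$, $\kappa =g(u)$, $\varepsilon =h(t)g(a)+h(1-t)g(b)$, obtaining the single pointwise inequality
\begin{eqnarray*}
&&f(u)\left[ h(t)g(a)+h(1-t)g(b)\right] +g(u)\left[ h(t)f(a)+h(1-t)f(b)\right] \\
&\leq &(fg)(u)+\left[ h(t)f(a)+h(1-t)f(b)\right] \left[ h(t)g(a)+h(1-t)g(b)\right] .
\end{eqnarray*}
Now $(fg)(u)$ sits on the large side and survives as $+\frac{1}{b-a}\int_{a}^{b}fg$ after integration; the product on the large side is estimated exactly as in your first paragraph; and the cross sum on the small side is bounded \emph{below} by $t\,u\left( f(a)+g(a)\right) +(1-t)\,u\left( f(b)+g(b)\right) $ using $h(t)\geq t$, $f(u)\geq u$, $g(u)\geq u$ and the nonnegativity of $f(a),g(a),f(b),g(b)$ --- this is what generates the affine coefficients upon integration. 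That single chain integrates directly to (\ref{201}). One caution if you rework it: the printed coefficients $\frac{2a+3b}{6}$, $\frac{3a+2b}{6}$ arise in (\ref{206}) from writing $\alpha \left( \alpha a+(1-\alpha )b\right) $ as $\alpha ^{2}a+(1-\alpha )b$; a correct expansion yields $\frac{2a+b}{6}$ and $\frac{a+2b}{6}$, so faithful bookkeeping will not land on the coefficients as stated.
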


\begin{proof}
Since $f,$ $g$ are $h-$convex functions on $I$, we have 
\begin{eqnarray*}
f\left( \alpha x+\beta y\right) &\leq &h\left( \alpha \right) f\left(
x\right) +h\left( \beta \right) f\left( y\right) \\
g\left( \alpha x+\beta y\right) &\leq &h\left( \alpha \right) g\left(
x\right) +h\left( \beta \right) g\left( y\right)
\end{eqnarray*}%
for all $\alpha ,\beta \in (0,1),$ $\alpha +\beta =1$. Using the elementary
inequality $\mu \leq \lambda $ and $\kappa \leq \varepsilon $ then $\mu
\varepsilon +\lambda \kappa \leq \mu \kappa +\lambda \varepsilon $ for $\mu
,\lambda ,\kappa ,\varepsilon \in 
\mathbb{R}
,$ so we get%
\begin{eqnarray}
&&f\left( \alpha x+\beta y\right) \left[ h\left( \alpha \right) g\left(
x\right) +h\left( \beta \right) g\left( y\right) \right]  \notag \\
&&+g\left( \alpha x+\beta y\right) \left[ h\left( \alpha \right) f\left(
x\right) +h\left( \beta \right) f\left( y\right) \right]  \notag \\
&\leq &f\left( \alpha x+\beta y\right) g\left( \alpha x+\beta y\right) 
\notag \\
&&+\left[ h\left( \alpha \right) f\left( x\right) +h\left( \beta \right)
f\left( y\right) \right] \left[ h\left( \alpha \right) g\left( x\right)
+h\left( \beta \right) g\left( y\right) \right]  \label{202}
\end{eqnarray}%
Using the other properties of $f,g$ and $h$ in Theorem \ref{th1}, we get%
\begin{eqnarray*}
&&f\left( \alpha x+\beta y\right) \left[ h\left( \alpha \right) g\left(
x\right) +h\left( \beta \right) g\left( y\right) \right] \\
&&+g\left( \alpha x+\beta y\right) \left[ h\left( \alpha \right) f\left(
x\right) +h\left( \beta \right) f\left( y\right) \right] \\
&=&h\left( \alpha \right) g\left( x\right) f\left( \alpha x+\beta y\right)
+h\left( \beta \right) g\left( y\right) f\left( \alpha x+\beta y\right) \\
&&+h\left( \alpha \right) f\left( x\right) g\left( \alpha x+\beta y\right)
+h\left( \beta \right) f\left( y\right) g\left( \alpha x+\beta y\right) ,
\end{eqnarray*}%
since $f,$ $g$ and $h$ are nonnegative functions%
\begin{eqnarray}
&&h\left( \alpha \right) \left[ g\left( x\right) f\left( \alpha x+\beta
y\right) +f\left( x\right) g\left( \alpha x+\beta y\right) \right]  \notag \\
&&+h\left( \beta \right) \left[ g\left( y\right) f\left( \alpha x+\beta
y\right) +f\left( y\right) g\left( \alpha x+\beta y\right) \right]  \notag \\
&\geq &\alpha \left[ \left( \alpha x+\beta y\right) g\left( x\right) +\left(
\alpha x+\beta y\right) f\left( x\right) \right]  \notag \\
&&+\beta \left[ \left( \alpha x+\beta y\right) g\left( y\right) +\left(
\alpha x+\beta y\right) f\left( y\right) \right]  \notag \\
&=&\alpha \left[ \left( \alpha x+\beta y\right) \left( g\left( x\right)
+f\left( x\right) \right) \right] +\beta \left[ \left( \alpha x+\beta
y\right) \left( g\left( y\right) +f\left( y\right) \right) \right] .
\label{203}
\end{eqnarray}%
For the right hand side of (\ref{202}), again using the other properties of $%
f,g$ and $h$ in Theorem \ref{th1}, we can write%
\begin{eqnarray}
&&f\left( \alpha x+\beta y\right) g\left( \alpha x+\beta y\right)  \notag \\
&&+\left[ h\left( \alpha \right) f\left( x\right) +h\left( \beta \right)
f\left( y\right) \right] \left[ h\left( \alpha \right) g\left( x\right)
+h\left( \beta \right) g\left( y\right) \right]  \notag \\
&=&f\left( \alpha x+\beta y\right) g\left( \alpha x+\beta y\right)
+h^{2}\left( \alpha \right) f\left( x\right) g\left( x\right) +h^{2}\left(
\beta \right) f\left( y\right) g\left( y\right)  \notag \\
&&+h\left( \alpha \right) h\left( \beta \right) f\left( x\right) g\left(
y\right) +h\left( \alpha \right) h\left( \beta \right) f\left( y\right)
g\left( x\right)  \notag \\
&\leq &f\left( \alpha x+\beta y\right) g\left( \alpha x+\beta y\right)
+h^{2}\left( \alpha \right) f\left( x\right) g\left( x\right) +h^{2}\left(
\beta \right) f\left( y\right) g\left( y\right)  \notag \\
&&+h\left( \alpha \beta \right) f\left( x\right) g\left( y\right) +h\left(
\alpha \beta \right) f\left( y\right) g\left( x\right)  \notag \\
&=&f\left( \alpha x+\beta y\right) g\left( \alpha x+\beta y\right)
+h^{2}\left( \alpha \right) f\left( x\right) g\left( x\right) +h^{2}\left(
\beta \right) f\left( y\right) g\left( y\right)  \notag \\
&&+h\left( \alpha \beta \right) \left( f\left( x\right) g\left( y\right)
+f\left( y\right) g\left( x\right) \right)  \notag \\
&\leq &f\left( \alpha x+\beta y\right) g\left( \alpha x+\beta y\right)
+h^{2}\left( \alpha \right) f\left( x\right) g\left( x\right) +h^{2}\left(
\beta \right) f\left( y\right) g\left( y\right)  \notag \\
&&+h\left( \alpha \beta \right) \left( f\left( x\right) g\left( x\right)
+f\left( y\right) g\left( y\right) \right)  \notag \\
&=&f\left( \alpha x+\beta y\right) g\left( \alpha x+\beta y\right) +\left[
h^{2}\left( \alpha \right) +h\left( \alpha \beta \right) \right] f\left(
x\right) g\left( x\right)  \notag \\
&&+\left[ h^{2}\left( \beta \right) +h\left( \alpha \beta \right) \right]
f\left( y\right) g\left( y\right)  \label{204}
\end{eqnarray}%
Now by combining expression (\ref{203}) and (\ref{204}), we obtain%
\begin{eqnarray}
&&\alpha \left[ \left( \alpha x+\beta y\right) \left( g\left( x\right)
+f\left( x\right) \right) \right] +\beta \left[ \left( \alpha x+\beta
y\right) \left( g\left( y\right) +f\left( y\right) \right) \right]  \notag \\
&\leq &f\left( \alpha x+\beta y\right) g\left( \alpha x+\beta y\right) + 
\left[ h^{2}\left( \alpha \right) +h\left( \alpha \beta \right) \right]
f\left( x\right) g\left( x\right)  \label{205} \\
&&+\left[ h^{2}\left( \beta \right) +h\left( \alpha \beta \right) \right]
f\left( y\right) g\left( y\right) .  \notag
\end{eqnarray}%
If we choose $x=a,$ $y=b$ and $\beta =1-\alpha $ in (\ref{205}), we have%
\begin{eqnarray}
&&\left( \alpha ^{2}a+\left( 1-\alpha \right) b\right) \left( f\left(
a\right) +g\left( a\right) \right) +\left( \alpha a+\left( 1-\alpha \right)
^{2}b\right) \left( f\left( b\right) +g\left( b\right) \right)  \notag \\
&\leq &f\left( \alpha a+\left( 1-\alpha \right) b\right) g\left( \alpha
a+\left( 1-\alpha \right) b\right) +\left[ h^{2}\left( \alpha \right)
+h\left( \alpha \left( 1-\alpha \right) \right) \right] f\left( a\right)
g\left( a\right)  \notag \\
&&+\left[ h^{2}\left( 1-\alpha \right) +h\left( \alpha \left( 1-\alpha
\right) \right) \right] f\left( b\right) g\left( b\right)  \notag \\
&&\text{by multiplicativity of }h,\text{ we deduce}  \notag \\
&\leq &f\left( \alpha a+\left( 1-\alpha \right) b\right) g\left( \alpha
a+\left( 1-\alpha \right) b\right) +\left[ h\left( \alpha ^{2}\right)
+h\left( \alpha -\alpha ^{2}\right) \right] f\left( a\right) g\left( a\right)
\notag \\
&&+\left[ h\left( \left( 1-\alpha \right) ^{2}\right) +h\left( \alpha
-\alpha ^{2}\right) \right] f\left( b\right) g\left( b\right)  \label{206}
\end{eqnarray}%
By integrating the result with respect to $\alpha $ over $\left[ 0,1\right] $%
, we obtain%
\begin{eqnarray*}
&&\left( a\int_{0}^{1}\alpha ^{2}d\alpha +b\int_{0}^{1}\left( 1-\alpha
\right) d\alpha \right) \left( f\left( a\right) +g\left( a\right) \right) \\
&&+\left( a\int_{0}^{1}\alpha d\alpha +b\int_{0}^{1}\left( 1-\alpha \right)
^{2}d\alpha \right) \left( f\left( b\right) +g\left( b\right) \right) \\
&=&\frac{2a+3b}{6}\left( f\left( a\right) +g\left( a\right) \right) +\frac{%
3a+2b}{6}\left( f\left( b\right) +g\left( b\right) \right) \\
&\leq &\frac{1}{b-a}\int_{a}^{b}fg\left( x\right) dx+f\left( a\right)
g\left( a\right) \int_{0}^{1}\left[ h\left( \alpha ^{2}\right) +h\left(
\alpha -\alpha ^{2}\right) \right] d\alpha \\
&&+f\left( b\right) g\left( b\right) \int_{0}^{1}\left[ h\left( \left(
1-\alpha \right) ^{2}\right) +h\left( \alpha -\alpha ^{2}\right) \right]
d\alpha
\end{eqnarray*}%
which completes the proof.
\end{proof}

\begin{corollary}
\bigskip If in (\ref{201}) we take $h(t)=1$, then we get an integral
inequality for $P-$functions with launching of necessary mathematical
operations,%
\begin{eqnarray}
&&\frac{2a+3b}{6}\left( f\left( a\right) +g\left( a\right) \right) +\frac{%
3a+2b}{6}\left( f\left( b\right) +g\left( b\right) \right)  \notag \\
&\leq &\frac{1}{b-a}\int_{a}^{b}\left( fg\right) \left( x\right) dx+2M\left(
a,b\right) .  \notag
\end{eqnarray}
\end{corollary}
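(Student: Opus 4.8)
The plan is straightforward: this corollary is simply the specialization $h\equiv 1$ of Theorem~\ref{th1}, so the only work is to check that $h(t)=1$ satisfies the hypotheses and then to evaluate the two integrals that remain.

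First I would verify that the constant function $h(t)=1$ meets every requirement imposed on $h$ in Theorem~\ref{th1}. It is positive; it is supermultiplicative (indeed multiplicative), since $h(xy)=1=h(x)h(y)$; and it satisfies $h(t)=1\geq t$ for all $t\in[0,1]$. Hence, under the standing assumptions that $f,g\in SX(h,I)$ are similarly ordered with $(f,g)(x)\geq x$, inequality (\ref{201}) is applicable with this choice of $h$. (Recall moreover that $SX(1,I)\supseteq P(I)$, which is the reason the resulting estimate is, in particular, an inequality for $P$-functions.)

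Next I would simplify the two definite integrals on the right-hand side of (\ref{201}). Since $h$ is identically $1$, we have $h(t(1-t))=h(t^{2})=h((1-t)^{2})=1$ for every $t\in[0,1]$, so that
\begin{equation*}
\int_{0}^{1}\left[ h(t(1-t))+h(t^{2})\right] dt=2,\qquad \int_{0}^{1}\left[ h(t(1-t))+h((1-t)^{2})\right] dt=2.
\end{equation*}

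Finally, I would substitute these values into (\ref{201}): the two boundary terms collapse to $2(fg)(a)+2(fg)(b)=2\left(f(a)g(a)+f(b)g(b)\right)=2M(a,b)$, with $M$ as defined in the Introduction, and this yields exactly the claimed inequality. There is no real obstacle in this argument — the corollary follows by a direct computation once Theorem~\ref{th1} is in hand.
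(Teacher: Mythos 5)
Your proposal is correct and is exactly the computation the paper intends: substitute $h\equiv 1$ into (\ref{201}), note that each of the two integrands becomes the constant $2$, and collect $2(fg)(a)+2(fg)(b)=2M(a,b)$. The verification that $h\equiv 1$ satisfies the hypotheses of Theorem~\ref{th1} is a welcome extra detail the paper leaves implicit.
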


\begin{corollary}
\bigskip If in (\ref{201}) we take $h(t)=t$, then we get an integral
inequality for ordinary convex functions with launching of necessary
mathematical operations,%
\begin{eqnarray*}
&&\frac{2a+3b}{6}\left( f\left( a\right) +g\left( a\right) \right) +\frac{%
3a+2b}{6}\left( f\left( b\right) +g\left( b\right) \right) \\
&\leq &\frac{1}{b-a}\int_{a}^{b}\left( fg\right) \left( x\right) dx+\left(
fg\right) \left( a\right) \int_{0}^{1}\left[ \left( t-t^{2}\right) +\left(
t^{2}\right) \right] dt \\
&&+\left( fg\right) \left( b\right) \int_{0}^{1}\left[ \left( t-t^{2}\right)
+\left( \left( 1-t\right) ^{2}\right) \right] dt \\
&=&\frac{1}{b-a}\int_{a}^{b}\left( fg\right) \left( x\right) dx+\frac{%
M\left( a,b\right) }{2}.
\end{eqnarray*}
\end{corollary}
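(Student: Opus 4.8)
The plan is to derive this corollary as the special case $h(t)=t$ of Theorem \ref{th1}, so the work splits into a short admissibility check followed by two elementary integral evaluations.

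First I would verify that $h(t)=t$ satisfies every hypothesis imposed on $h$ in Theorem \ref{th1}. It is positive on $(0,1)$; it satisfies $h(t)\ge t$ trivially (indeed with equality); and it is supermultiplicative---in fact multiplicative---since $h(xy)=xy=h(x)h(y)$, so the step labelled ``by multiplicativity of $h$'' in the proof of Theorem \ref{th1} goes through with equality. Consequently, for any nonnegative convex functions $f,g$ on $I$ that are similarly ordered and satisfy $(f,g)(x)\ge x$, inequality (\ref{201}) holds verbatim with $h(t)=t$.

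Next I would substitute $h(t)=t$ into (\ref{201}): this replaces $h\big(t(1-t)\big)$ by $t-t^{2}$, $h(t^{2})$ by $t^{2}$, and $h\big((1-t)^{2}\big)$ by $(1-t)^{2}$, producing exactly the middle member of the displayed chain in the statement. The remaining step is arithmetic: $\int_{0}^{1}\big[(t-t^{2})+t^{2}\big]\,dt=\int_{0}^{1}t\,dt=\tfrac12$ and $\int_{0}^{1}\big[(t-t^{2})+(1-t)^{2}\big]\,dt=\int_{0}^{1}(1-t)\,dt=\tfrac12$, so the last two terms on the right of (\ref{201}) collapse to $\tfrac12\big[(fg)(a)+(fg)(b)\big]=\tfrac12 M(a,b)$ by the convention $M(a,b)=f(a)g(a)+f(b)g(b)$. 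This yields the claimed inequality.

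There is essentially no obstacle here: the only point deserving explicit mention is the admissibility check, namely that $h(t)=t$ is (super)multiplicative and obeys $h(t)\ge t$, since after that the conclusion is a mechanical specialization of Theorem \ref{th1} together with the two integrals above.
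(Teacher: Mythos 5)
Your proposal is correct and follows essentially the same route as the paper: the corollary is just the specialization $h(t)=t$ of (\ref{201}), with the two integrals each evaluating to $\tfrac12$ so that the extra terms collapse to $\tfrac12 M(a,b)$. Your added admissibility check (that $h(t)=t$ is multiplicative and satisfies $h(t)\geq t$) is a sensible extra sentence the paper leaves implicit.
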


\begin{corollary}
\bigskip If in (\ref{201}) we take $h(t)=t^{s}$, then we obtain an integral
inequality for $s-$convex functions in the second sense with use of the Beta
function of Euler type%
\begin{eqnarray*}
&&\frac{2a+3b}{6}\left( f\left( a\right) +g\left( a\right) \right) +\frac{%
3a+2b}{6}\left( f\left( b\right) +g\left( b\right) \right) \\
&\leq &\frac{1}{b-a}\int_{a}^{b}\left( fg\right) \left( x\right) dx+\left(
fg\right) \left( a\right) \int_{0}^{1}\left[ t^{s}\left( 1-t\right)
^{s}+t^{2s}\right] dt \\
&&+\left( fg\right) \left( b\right) \int_{0}^{1}\left[ t^{s}\left(
1-t\right) ^{s}+\left( 1-t\right) ^{2s}\right] dt \\
&=&\frac{1}{b-a}\int_{a}^{b}\left( fg\right) \left( x\right) dx+\left(
fg\right) \left( a\right) \left[ \beta \left( s+1,s+1\right) +\beta \left(
2s+1,1\right) \right] \\
&&+\left( fg\right) \left( b\right) \left[ \beta \left( s+1,s+1\right)
+\beta \left( 1,2s+1\right) \right] \\
&=&\frac{1}{b-a}\int_{a}^{b}\left( fg\right) \left( x\right) dx+M\left(
a,b\right) \left[ \beta \left( s+1,s+1\right) +\beta \left( 2s+1,1\right) %
\right] .
\end{eqnarray*}
\end{corollary}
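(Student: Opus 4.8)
The plan is to specialize Theorem~\ref{th1} to the weight $h(t)=t^{s}$ with $s\in(0,1]$, to check that this choice of $h$ satisfies all the hypotheses of that theorem, and then to evaluate the three resulting one-variable integrals by means of the Euler Beta function $\beta(p,q)=\int_{0}^{1}t^{p-1}(1-t)^{q-1}\,dt$.

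First I would verify the hypotheses of Theorem~\ref{th1}. For $s\in(0,1]$ the function $h(t)=t^{s}$ is positive on $(0,1)$; it is multiplicative, since $(xy)^{s}=x^{s}y^{s}$, and hence in particular supermultiplicative; and $t^{s}\geq t$ for every $t\in[0,1]$. Consequently, whenever $f,g$ are similarly ordered functions lying in $K_{s}^{2}\subseteq SX(h,I)$ and satisfying $(f,g)(x)\geq x$, inequality (\ref{201}) applies verbatim with this $h$.

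Next I would substitute $h(t(1-t))=t^{s}(1-t)^{s}$, $h(t^{2})=t^{2s}$ and $h((1-t)^{2})=(1-t)^{2s}$ into the right-hand side of (\ref{201}) and compute the integrals:
\[
\int_{0}^{1}t^{s}(1-t)^{s}\,dt=\beta(s+1,s+1),\qquad \int_{0}^{1}t^{2s}\,dt=\beta(2s+1,1)=\frac{1}{2s+1},
\]
and likewise $\int_{0}^{1}(1-t)^{2s}\,dt=\beta(1,2s+1)=\frac{1}{2s+1}$. Collecting the coefficients of $(fg)(a)$ and of $(fg)(b)$ then turns the right-hand side of (\ref{201}) into
\[
\frac{1}{b-a}\int_{a}^{b}(fg)(x)\,dx+(fg)(a)\bigl[\beta(s+1,s+1)+\beta(2s+1,1)\bigr]+(fg)(b)\bigl[\beta(s+1,s+1)+\beta(1,2s+1)\bigr].
\]

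Finally, using the symmetry $\beta(2s+1,1)=\beta(1,2s+1)$ of the Beta function together with $(fg)(a)+(fg)(b)=M(a,b)$, I would factor the last two terms as $M(a,b)\bigl[\beta(s+1,s+1)+\beta(2s+1,1)\bigr]$, which is precisely the claimed inequality. There is no genuine obstacle in this argument; the only points that need care are confirming that $t^{s}$ meets the (super)multiplicativity and growth requirements that license the use of (\ref{201}), and keeping the Beta-function parameters straight---for instance $\int_{0}^{1}t^{s}(1-t)^{s}\,dt$ equals $\beta(s+1,s+1)$ rather than $\beta(s,s)$ owing to the shift by one in the exponents.
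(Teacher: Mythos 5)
Your proposal is correct and follows essentially the same route as the paper: the corollary is obtained simply by substituting $h(t)=t^{s}$ into (\ref{201}), evaluating $\int_{0}^{1}t^{s}(1-t)^{s}\,dt=\beta(s+1,s+1)$ and $\int_{0}^{1}t^{2s}\,dt=\int_{0}^{1}(1-t)^{2s}\,dt=\beta(2s+1,1)$, and collecting the coefficients of $(fg)(a)$ and $(fg)(b)$ into $M(a,b)$ via the symmetry of the Beta function. Your additional verification that $t^{s}$ is (super)multiplicative and satisfies $t^{s}\geq t$ on $[0,1]$ is a welcome check of the hypotheses of Theorem \ref{th1} that the paper leaves implicit.
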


\begin{theorem}
\label{th2}\bigskip Let $f,g\in SX(h,I)$, $h$ is superadditive and
nonnegative such that $h\left( \alpha \right) \geq \alpha $ and $f,g$ be
similarly ordered functions on $I$ for all $x,y\in I\subseteq 
\mathbb{R}
$. Then for all $\alpha \in (0,1)$ and $\alpha +\beta =1$ we have following
inequality;%
\begin{eqnarray}
&&\frac{M\left( a,b\right) }{6}+\frac{N\left( a,b\right) }{3}  \notag \\
&\leq &h\left( 1\right) \left[ f\left( a\right) g\left( a\right)
\int_{0}^{1}h\left( \alpha \right) d\alpha +f\left( b\right) g\left(
b\right) \int_{0}^{1}h\left( 1-\alpha \right) d\alpha \right] .  \label{209}
\end{eqnarray}
\end{theorem}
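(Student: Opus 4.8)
The plan is, in the spirit of the proof of Theorem~\ref{th1}, to produce an inequality depending on a parameter $\alpha\in(0,1)$ and then integrate it over $[0,1]$, with superadditivity of $h$ now playing the role that supermultiplicativity played there. Throughout I take $x=a$, $y=b$ and $\beta=1-\alpha$, and I use repeatedly that $f,g$ are nonnegative (being in $SX(h,I)$) and that $h$ is nonnegative.

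The heart of the matter is a pointwise estimate obtained by essentially elementary means. Since $h(\alpha)\ge\alpha$ and $h(\beta)\ge\beta$ on $(0,1)$ and $f,g$ are nonnegative, the linear combinations obey $\alpha f(a)+\beta f(b)\le h(\alpha)f(a)+h(\beta)f(b)$ and $\alpha g(a)+\beta g(b)\le h(\alpha)g(a)+h(\beta)g(b)$. Multiplying these two inequalities (all quantities being nonnegative) and expanding the right-hand side gives
\[
\bigl(\alpha f(a)+\beta f(b)\bigr)\bigl(\alpha g(a)+\beta g(b)\bigr)\le h^{2}(\alpha)f(a)g(a)+h^{2}(\beta)f(b)g(b)+h(\alpha)h(\beta)\bigl(f(a)g(b)+f(b)g(a)\bigr).
\]
Because $f$ is similarly ordered with $g$, $\bigl(f(a)-f(b)\bigr)\bigl(g(a)-g(b)\bigr)\ge0$, i.e.\ $N(a,b)\le M(a,b)$, so the cross term is at most $h(\alpha)h(\beta)\bigl(f(a)g(a)+f(b)g(b)\bigr)$; collecting the coefficients of $f(a)g(a)$ and of $f(b)g(b)$, the right-hand side above is then at most $h(\alpha)\bigl(h(\alpha)+h(\beta)\bigr)f(a)g(a)+h(\beta)\bigl(h(\alpha)+h(\beta)\bigr)f(b)g(b)$. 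Since $h$ is superadditive and $\alpha+\beta=1$, we have $h(\alpha)+h(\beta)\le h(1)$, and one further use of nonnegativity yields
\[
\bigl(\alpha f(a)+\beta f(b)\bigr)\bigl(\alpha g(a)+\beta g(b)\bigr)\le h(1)\bigl(h(\alpha)f(a)g(a)+h(1-\alpha)f(b)g(b)\bigr).
\]

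Now I integrate in $\alpha$ over $[0,1]$. Using $\int_{0}^{1}\alpha^{2}\,d\alpha=\int_{0}^{1}(1-\alpha)^{2}\,d\alpha=\tfrac13$ and $\int_{0}^{1}\alpha(1-\alpha)\,d\alpha=\tfrac16$, the left-hand side integrates to $\tfrac{M(a,b)}{3}+\tfrac{N(a,b)}{6}$, so
\[
\frac{M(a,b)}{3}+\frac{N(a,b)}{6}\le h(1)\Bigl(f(a)g(a)\int_{0}^{1}h(\alpha)\,d\alpha+f(b)g(b)\int_{0}^{1}h(1-\alpha)\,d\alpha\Bigr).
\]
Finally, $N(a,b)\le M(a,b)$ gives $\tfrac{M}{6}+\tfrac{N}{3}\le\tfrac{M}{3}+\tfrac{N}{6}$, and (\ref{209}) follows.

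The only genuinely delicate step is the one in the second paragraph: after replacing $f(a)g(b)+f(b)g(a)$ by $f(a)g(a)+f(b)g(b)$ one has to notice that the coefficient of $f(a)g(a)$ is exactly $h(\alpha)\bigl(h(\alpha)+h(\beta)\bigr)$, so that \emph{superadditivity} rather than supermultiplicativity is what closes the estimate, and one has to keep $\alpha$ paired with $a$ and $1-\alpha$ with $b$ so that the right-hand side emerges in the asymmetric form of (\ref{209}). As an even shorter route one may instead simply observe that $f$ s.o.\ $g$ gives $\tfrac{M}{6}+\tfrac{N}{3}\le\tfrac{M}{2}$, while superadditivity together with $h(\alpha)\ge\alpha$ forces $h(1)\ge2h(\tfrac12)\ge1$ and $\int_{0}^{1}h(\alpha)\,d\alpha\ge\tfrac12$, so the right-hand side of (\ref{209}) is already at least $\tfrac{M}{2}$; everything else is routine.
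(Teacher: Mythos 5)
Your proof is correct, and while its skeleton (a pointwise inequality in $\alpha$, cross terms removed via similar ordering, superadditivity producing the factor $h(1)$, then integration over $[0,1]$) matches the paper's, the route differs in two genuine ways. First, where the paper insists on its inequality (\ref{*}) --- taking $\mu=\alpha f(a)+\beta f(b)$, $\kappa=\alpha g(a)+\beta g(b)$, and $\lambda,\varepsilon$ their $h$-weighted counterparts, and working with $\mu\varepsilon+\lambda\kappa\le\mu\kappa+\lambda\varepsilon$, so that a residual $\alpha\beta\,M(a,b)$ term survives on the right and, after integration, supplies the $M(a,b)/6$ that is subtracted across to leave $M/6+N/3$ on the left --- you simply multiply the two nonnegative inequalities $\mu\le\lambda$, $\kappa\le\varepsilon$ to get $\mu\kappa\le\lambda\varepsilon$, i.e.\ you use the cruder (\ref{**}). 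Second, your pointwise bound integrates to $\frac{M(a,b)}{3}+\frac{N(a,b)}{6}\le h(1)\bigl[f(a)g(a)\int_0^1h(\alpha)\,d\alpha+f(b)g(b)\int_0^1h(1-\alpha)\,d\alpha\bigr]$, which is precisely the paper's Theorem \ref{th3}, inequality (\ref{215}); you then obtain (\ref{209}) from it via $N(a,b)\le M(a,b)$. Since Theorems \ref{th2} and \ref{th3} carry identical hypotheses this is legitimate, and it makes explicit that (\ref{215}) is the sharper of the two statements. Your closing observation is also correct and worth recording: superadditivity gives $h(1)\ge 2h(\tfrac{1}{2})\ge 1$ and $h(\alpha)\ge\alpha$ gives $\int_0^1h(\alpha)\,d\alpha\ge\tfrac{1}{2}$ and $\int_0^1h(1-\alpha)\,d\alpha\ge\tfrac{1}{2}$, so the right-hand side of (\ref{209}) is at least $M(a,b)/2$, which already dominates $M/6+N/3$ by similar ordering alone; the theorem is therefore true for essentially trivial reasons, a fact the paper's longer computation obscures.
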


\begin{proof}
\bigskip Since $f,$ $g$ are $h-$convex functions on $I$, and using right
hand side of (\ref{111}), we have 
\begin{eqnarray*}
\alpha f\left( x\right) +\beta f\left( y\right) &\leq &h\left( \alpha
\right) f\left( x\right) +h\left( \beta \right) f\left( y\right) \\
\alpha g\left( x\right) +\beta g\left( y\right) &\leq &h\left( \alpha
\right) g\left( x\right) +h\left( \beta \right) g\left( y\right)
\end{eqnarray*}%
for all $\alpha ,\beta \in (0,1),$ $\alpha +\beta =1$. Using the elementary
inequality $\mu \leq \lambda $ and $\kappa \leq \varepsilon $ then $\mu
\varepsilon +\lambda \kappa \leq \mu \kappa +\lambda \varepsilon $ for $\mu
,\lambda ,\kappa ,\varepsilon \in 
\mathbb{R}
,$ so we get%
\begin{eqnarray}
&&\left[ \alpha f\left( x\right) +\beta f\left( y\right) \right] \left[
h\left( \alpha \right) g\left( x\right) +h\left( \beta \right) g\left(
y\right) \right]  \notag \\
&&+\left[ \alpha g\left( x\right) +\beta g\left( y\right) \right] \left[
h\left( \alpha \right) f\left( x\right) +h\left( \beta \right) f\left(
y\right) \right]  \notag \\
&\leq &\left[ \alpha f\left( x\right) +\beta f\left( y\right) \right] \left[
\alpha g\left( x\right) +\beta g\left( y\right) \right]  \label{210} \\
&&+\left[ h\left( \alpha \right) f\left( x\right) +h\left( \beta \right)
f\left( y\right) \right] \left[ h\left( \alpha \right) g\left( x\right)
+h\left( \beta \right) g\left( y\right) \right] .  \notag
\end{eqnarray}%
Using the other properties of $h$ in Theorem \ref{th2} on the left hand side
of (\ref{210}), we get%
\begin{eqnarray*}
&&\left[ \alpha f\left( x\right) +\beta f\left( y\right) \right] \left[
h\left( \alpha \right) g\left( x\right) +h\left( \beta \right) g\left(
y\right) \right] \\
&&+\left[ \alpha g\left( x\right) +\beta g\left( y\right) \right] \left[
h\left( \alpha \right) f\left( x\right) +h\left( \beta \right) f\left(
y\right) \right] \\
&=&\alpha h\left( \alpha \right) f\left( x\right) g\left( x\right) +\alpha
h\left( \beta \right) f\left( x\right) g\left( y\right) \\
&&+\beta h\left( \alpha \right) f\left( y\right) g\left( x\right) +\beta
h\left( \beta \right) f\left( y\right) g\left( y\right) \\
&&+\alpha h\left( \alpha \right) f\left( x\right) g\left( x\right) +\alpha
h\left( \beta \right) f\left( y\right) g\left( x\right) \\
&&+\beta h\left( \alpha \right) f\left( x\right) g\left( y\right) +\beta
h\left( \beta \right) f\left( y\right) g\left( y\right) \\
&=&2\alpha h\left( \alpha \right) f\left( x\right) g\left( x\right) +\alpha
h\left( \beta \right) \left[ f\left( x\right) g\left( y\right) +f\left(
y\right) g\left( x\right) \right] \\
&&+\beta h\left( \alpha \right) \left[ f\left( x\right) g\left( y\right)
+f\left( y\right) g\left( x\right) \right] \\
&&+2\beta h\left( \beta \right) f\left( y\right) g\left( y\right) ,
\end{eqnarray*}%
if $h$ be a non-negative function that $h\left( \alpha \right) \geq \alpha $%
\begin{eqnarray}
&&2\alpha h\left( \alpha \right) f\left( x\right) g\left( x\right) +2\beta
h\left( \beta \right) f\left( y\right) g\left( y\right)  \notag \\
&&+\left[ \alpha h\left( \beta \right) +\beta h\left( \alpha \right) \right] %
\left[ f\left( x\right) g\left( y\right) +f\left( y\right) g\left( x\right) %
\right]  \notag \\
&\geq &2\alpha ^{2}f\left( x\right) g\left( x\right) +2\alpha \beta \left[
f\left( x\right) g\left( y\right) +f\left( y\right) g\left( x\right) \right]
+2\beta ^{2}f\left( y\right) g\left( y\right) .  \label{211}
\end{eqnarray}%
However, using the other properties of $f,$ $g$ and $h$ in Theorem \ref{th2}
on the right hand side of (\ref{210}), we get%
\begin{eqnarray}
&&\left[ \alpha f\left( x\right) +\beta f\left( y\right) \right] \left[
\alpha g\left( x\right) +\beta g\left( y\right) \right]  \notag \\
&&+\left[ h\left( \alpha \right) f\left( x\right) +h\left( \beta \right)
f\left( y\right) \right] \left[ h\left( \alpha \right) g\left( x\right)
+h\left( \beta \right) g\left( y\right) \right]  \notag \\
&=&\alpha ^{2}f\left( x\right) g\left( x\right) +\alpha \beta f\left(
x\right) g\left( y\right)  \notag \\
&&+\alpha \beta f\left( y\right) g\left( x\right) +\beta ^{2}f\left(
y\right) g\left( y\right)  \notag \\
&&+h^{2}\left( \alpha \right) f\left( x\right) g\left( x\right) +h\left(
\alpha \right) h\left( \beta \right) f\left( x\right) g\left( y\right) 
\notag \\
&&+h\left( \alpha \right) h\left( \beta \right) f\left( y\right) g\left(
x\right) +h^{2}\left( \beta \right) f\left( y\right) g\left( y\right)  \notag
\\
&=&\alpha ^{2}f\left( x\right) g\left( x\right) +\beta ^{2}f\left( y\right)
g\left( y\right)  \notag \\
&&+\alpha \beta \left[ f\left( x\right) g\left( y\right) +f\left( y\right)
g\left( x\right) \right]  \notag \\
&&+h^{2}\left( \alpha \right) f\left( x\right) g\left( x\right) +h^{2}\left(
\beta \right) f\left( y\right) g\left( y\right)  \notag \\
&&+h\left( \alpha \right) h\left( \beta \right) \left[ f\left( x\right)
g\left( y\right) +f\left( y\right) g\left( x\right) \right]  \notag \\
&\leq &\alpha ^{2}f\left( x\right) g\left( x\right) +\beta ^{2}f\left(
y\right) g\left( y\right) +\alpha \beta \left[ f\left( x\right) g\left(
x\right) +f\left( y\right) g\left( y\right) \right]  \notag \\
&&+h^{2}\left( \alpha \right) f\left( x\right) g\left( x\right) +h^{2}\left(
\beta \right) f\left( y\right) g\left( y\right)  \notag \\
&&+h\left( \alpha \right) h\left( \beta \right) \left[ f\left( x\right)
g\left( x\right) +f\left( y\right) g\left( y\right) \right]  \notag \\
&=&\left[ \alpha ^{2}+\alpha \beta +h^{2}\left( \alpha \right) +h\left(
\alpha \right) h\left( \beta \right) \right] f\left( x\right) g\left(
x\right)  \notag \\
&&+\left[ \alpha \beta +\beta ^{2}+h\left( \alpha \right) h\left( \beta
\right) +h^{2}\left( \beta \right) \right] f\left( y\right) g\left( y\right)
\notag \\
&\leq &\left[ \alpha ^{2}+\alpha \beta +h\left( \alpha \right) h\left(
\alpha +\beta \right) \right] f\left( x\right) g\left( x\right)  \notag \\
&&+\left[ \alpha \beta +\beta ^{2}+h\left( \beta \right) h\left( \alpha
+\beta \right) \right] f\left( y\right) g\left( y\right)  \notag \\
&=&\left[ \alpha ^{2}+\alpha \beta +h\left( \alpha \right) h\left( 1\right) %
\right] f\left( x\right) g\left( x\right)  \label{212} \\
&&+\left[ \alpha \beta +\beta ^{2}+h\left( \beta \right) h\left( 1\right) %
\right] f\left( y\right) g\left( y\right) .  \notag
\end{eqnarray}%
Then by expression (\ref{211}) and (\ref{212});%
\begin{eqnarray}
&&\alpha ^{2}f\left( x\right) g\left( x\right) +2\alpha \beta \left[ f\left(
x\right) g\left( y\right) +f\left( y\right) g\left( x\right) \right] +\beta
^{2}f\left( y\right) g\left( y\right)  \notag \\
&\leq &\left[ \alpha \beta +h\left( \alpha \right) h\left( 1\right) \right]
f\left( x\right) g\left( x\right) +\left[ \alpha \beta +h\left( \beta
\right) h\left( 1\right) \right] f\left( y\right) g\left( y\right)
\label{213}
\end{eqnarray}%
by taking $x=a$, $y=b$ and $\beta =1-\alpha $ in (\ref{213}), we have%
\begin{eqnarray*}
&&\alpha ^{2}f\left( a\right) g\left( a\right) +2\alpha \left( 1-\alpha
\right) \left[ f\left( a\right) g\left( b\right) +f\left( b\right) g\left(
a\right) \right] +\left( 1-\alpha \right) ^{2}f\left( b\right) g\left(
b\right) \\
&\leq &\left[ \alpha \left( 1-\alpha \right) +h\left( \alpha \right) h\left(
1\right) \right] f\left( a\right) g\left( a\right) +\left[ \alpha \left(
1-\alpha \right) +h\left( 1-\alpha \right) h\left( 1\right) \right] f\left(
b\right) g\left( b\right)
\end{eqnarray*}%
By integrating the result with respect to $\alpha $ over $[0,1],$ we get%
\begin{eqnarray*}
&&f\left( a\right) g\left( a\right) \int_{0}^{1}\alpha ^{2}d\alpha +2\left[
f\left( a\right) g\left( b\right) +f\left( b\right) g\left( a\right) \right]
\int_{0}^{1}\alpha \left( 1-\alpha \right) d\alpha \\
&&+f\left( b\right) g\left( b\right) \int_{0}^{1}\left( 1-\alpha \right)
^{2}d\alpha \\
&=&\frac{f\left( a\right) g\left( a\right) +f\left( b\right) g\left(
b\right) }{3}+\frac{f\left( a\right) g\left( b\right) +f\left( b\right)
g\left( a\right) }{3} \\
&\leq &f\left( a\right) g\left( a\right) \left[ \int_{0}^{1}\alpha \left(
1-\alpha \right) d\alpha +h\left( 1\right) \int_{0}^{1}h\left( \alpha
\right) d\alpha \right] \\
&&+f\left( b\right) g\left( b\right) \left[ \int_{0}^{1}\alpha \left(
1-\alpha \right) d\alpha +h\left( 1\right) \int_{0}^{1}h\left( 1-\alpha
\right) dt\right] \\
&=&\frac{f\left( a\right) g\left( a\right) +f\left( b\right) g\left(
b\right) }{6} \\
&&+h\left( 1\right) \left[ f\left( a\right) g\left( a\right)
\int_{0}^{1}h\left( \alpha \right) d\alpha +f\left( b\right) g\left(
b\right) \int_{0}^{1}h\left( 1-\alpha \right) d\alpha \right]
\end{eqnarray*}%
which completes the proof of (\ref{209}).
\end{proof}

\begin{corollary}
\bigskip In Theorem \ref{th2}, if we choose $h(t)=t$, then inequality of\ (%
\ref{209}) brings inequality (\ref{112}) down.
\end{corollary}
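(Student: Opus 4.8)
The plan is to specialize the general inequality (\ref{209}) to the choice $h(t)=t$ and then simplify both sides until a recognizable expression appears. First I would record the two elementary evaluations needed: for $h(t)=t$ one has $h(1)=1$, $\int_{0}^{1}h(\alpha)\,d\alpha=\int_{0}^{1}\alpha\,d\alpha=\tfrac{1}{2}$, and likewise $\int_{0}^{1}h(1-\alpha)\,d\alpha=\tfrac{1}{2}$. Substituting these into the right-hand side of (\ref{209}) collapses it to $\tfrac{1}{2}\bigl(f(a)g(a)+f(b)g(b)\bigr)=\tfrac{M(a,b)}{2}$, using the notation $M(a,b)=f(a)g(a)+f(b)g(b)$ fixed in the Introduction. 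Thus (\ref{209}) becomes $\tfrac{M(a,b)}{6}+\tfrac{N(a,b)}{3}\le\tfrac{M(a,b)}{2}$.

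Next I would rearrange this last inequality. Subtracting $\tfrac{M(a,b)}{6}$ from both sides leaves $\tfrac{N(a,b)}{3}\le\tfrac{M(a,b)}{3}$, i.e. $N(a,b)\le M(a,b)$, which written out is $f(a)g(b)+f(b)g(a)\le f(a)g(a)+f(b)g(b)$. Transposing every term to one side gives $0\le f(a)g(a)-f(a)g(b)-f(b)g(a)+f(b)g(b)=\bigl(f(a)-f(b)\bigr)\bigl(g(a)-g(b)\bigr)$, which is exactly the similarly-ordered condition (\ref{112}) evaluated at $x=a$, $y=b$. Since $a,b\in I$ are arbitrary, this recovers (\ref{112}) in full, so specializing $h(t)=t$ in (\ref{209}) indeed ``brings down'' (\ref{112}).

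I do not expect a genuine obstacle here: the entire argument is the bookkeeping of the two integrals $\int_{0}^{1}\alpha\,d\alpha$ and $\int_{0}^{1}(1-\alpha)\,d\alpha$ together with the observation that the final rearrangement is an algebraic identity, so the chain of steps is in fact reversible. The only point worth a word of care is consistency with the hypotheses of Theorem \ref{th2}: the function $h(t)=t$ satisfies $h(\alpha)\ge\alpha$ and is (trivially) superadditive and nonnegative, so (\ref{209}) is legitimately applicable for this $h$, and the corollary merely records that in this borderline case the conclusion of Theorem \ref{th2} degenerates back to the defining inequality of the ``similarly ordered'' assumption.
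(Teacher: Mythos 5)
Your computation is correct and is exactly the (omitted) argument the paper intends: substituting $h(t)=t$ gives $h(1)=1$ and both integrals equal to $\tfrac{1}{2}$, so (\ref{209}) collapses to $\tfrac{M(a,b)}{6}+\tfrac{N(a,b)}{3}\le\tfrac{M(a,b)}{2}$, which rearranges to $N(a,b)\le M(a,b)$, i.e.\ $\left(f(a)-f(b)\right)\left(g(a)-g(b)\right)\ge 0$, the similarly-ordered condition (\ref{112}) at the points $a,b$. The paper states this corollary without proof, and your verification that $h(t)=t$ satisfies the hypotheses of Theorem \ref{th2} is a worthwhile addition; the only caveat is that the specialization recovers (\ref{112}) at the particular pair $a,b$ rather than literally for all $x,y\in I$, which is all the corollary can mean.
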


\begin{corollary}
In Theorem \ref{th2}, if we choose $h(t)=t^{s}$, then we get an integral
inequality for $s-$convex functions in the second sense;%
\begin{equation}
\frac{M\left( a,b\right) }{6}+\frac{N\left( a,b\right) }{3}\leq \frac{%
M\left( a,b\right) }{s+1}.  \label{214}
\end{equation}%
And, in (\ref{214}), if we choose $s=1,$ then the inequality\ of\ (\ref{214}%
) above brings inequality (\ref{112}) down.
\end{corollary}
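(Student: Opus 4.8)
The plan is to obtain both assertions by directly specialising the master inequality (\ref{209}) of Theorem \ref{th2} to the weight $h(t)=t^{s}$, and then by letting $s=1$ in the resulting estimate. Since (\ref{209}) is already available, nothing new has to be proved from scratch; the work is the substitution and two elementary integrals.

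First I would record that $h(t)=t^{s}$ with $s\in (0,1]$ is a legitimate choice in Theorem \ref{th2}: it is nonnegative, and for $0<t\le 1$ and $s\le 1$ one has $t^{s}\ge t$, so the required property $h(\alpha )\ge \alpha $ holds on $(0,1)$. Granting this, (\ref{209}) may be applied with this $h$.

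Next I would evaluate the three quantities occurring on the right of (\ref{209}) for this weight. One has $h(1)=1^{s}=1$, and the two integrals are elementary,
\begin{equation*}
\int_{0}^{1}h(\alpha )\,d\alpha =\int_{0}^{1}\alpha ^{s}\,d\alpha =\frac{1}{s+1},\qquad \int_{0}^{1}h(1-\alpha )\,d\alpha =\int_{0}^{1}(1-\alpha )^{s}\,d\alpha =\frac{1}{s+1},
\end{equation*}
the second following from the substitution $u=1-\alpha $ (equivalently from $\beta (1,s+1)=\frac{1}{s+1}$). Inserting these three values into (\ref{209}) collapses its right-hand side,
\begin{equation*}
1\cdot \left[ f(a)g(a)\cdot \frac{1}{s+1}+f(b)g(b)\cdot \frac{1}{s+1}\right] =\frac{f(a)g(a)+f(b)g(b)}{s+1}=\frac{M(a,b)}{s+1},
\end{equation*}
which is precisely (\ref{214}).

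For the final statement I would put $s=1$ in (\ref{214}), which then reads $\frac{M(a,b)}{6}+\frac{N(a,b)}{3}\le \frac{M(a,b)}{2}$. Transposing the $M$-terms gives $\frac{N(a,b)}{3}\le \frac{M(a,b)}{2}-\frac{M(a,b)}{6}=\frac{M(a,b)}{3}$, i.e. $N(a,b)\le M(a,b)$; written out this is $f(a)g(b)+f(b)g(a)\le f(a)g(a)+f(b)g(b)$, and rearrangement turns it into $\left( f(a)-f(b)\right) \left( g(a)-g(b)\right) \ge 0$, which is the similarly ordered condition (\ref{112}) for the pair $\{a,b\}$. I do not expect a genuine obstacle: once the choice $h(t)=t^{s}$ is checked to be admissible in Theorem \ref{th2}, the remainder is a single substitution, the two one-line integral computations, and an elementary rearrangement.
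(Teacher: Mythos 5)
Your computation is exactly what the paper intends (it offers no written proof of this corollary): substitute $h(t)=t^{s}$ into (\ref{209}), use $h(1)=1$ and $\int_{0}^{1}\alpha ^{s}\,d\alpha =\int_{0}^{1}(1-\alpha )^{s}\,d\alpha =\frac{1}{s+1}$ to collapse the right-hand side to $\frac{M(a,b)}{s+1}$, and for $s=1$ rearrange $\frac{M}{6}+\frac{N}{3}\leq \frac{M}{2}$ into $N(a,b)\leq M(a,b)$, i.e.\ $\left( f(a)-f(b)\right) \left( g(a)-g(b)\right) \geq 0$, which is (\ref{112}). All of that is correct. The one genuine gap is in your admissibility check: Theorem \ref{th2} assumes $h$ is \emph{superadditive}, and you verify only nonnegativity and $h(\alpha )\geq \alpha $. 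In fact $h(t)=t^{s}$ is subadditive rather than superadditive for $0<s<1$, since $(x+y)^{s}\leq x^{s}+y^{s}$ there, and superadditivity is genuinely used in the proof of (\ref{209}) (in the step $h^{2}(\alpha )+h(\alpha )h(\beta )\leq h(\alpha )h(\alpha +\beta )$). So the hypothesis you skipped actually fails except at $s=1$, where $t^{s}=t$ is additive and everything goes through. This is a defect of the corollary as the paper states it, not something you introduced, but a complete write-up should either flag it or restrict the claim accordingly.
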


\begin{theorem}
\label{th3}\bigskip Let $f,g\in SX\left( h,I\right) $, $h$ is superadditive
and nonnegative such that $h\left( \alpha \right) \geq \alpha $ and $f,g$ be
similarly ordered functions on $I$ for all $x,y\in I\subseteq 
\mathbb{R}
$. Then for all $\alpha \in (0,1)$ and $\alpha +\beta =1,$ we have%
\begin{eqnarray}
&&\frac{M\left( a,b\right) }{3}+\frac{N\left( a,b\right) }{6}  \label{215} \\
&\leq &h\left( 1\right) \left[ f\left( a\right) g\left( a\right)
\int_{0}^{1}h\left( \alpha \right) d\alpha +f\left( b\right) g\left(
b\right) \int_{0}^{1}h\left( 1-\alpha \right) d\alpha \right] .  \notag
\end{eqnarray}
\end{theorem}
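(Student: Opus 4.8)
The plan is to follow the same skeleton as the proof of Theorem \ref{th2}, but to combine the two $h$-convexity estimates for $f$ and $g$ by simply multiplying them as an inequality between nonnegative numbers (the weak form (\ref{**})), rather than by the rearrangement (\ref{*}). Using (\ref{*}) doubles the ``cross'' term $f(x)g(y)+f(y)g(x)$ and leads to the constants $\tfrac16,\tfrac13$ of Theorem \ref{th2}; a plain product instead keeps that term with coefficient $\alpha\beta$, and after integration this is exactly what produces $\tfrac{M(a,b)}{3}+\tfrac{N(a,b)}{6}$ in (\ref{215}).

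First, fix $\alpha\in(0,1)$, put $\beta=1-\alpha$, and take $x,y\in I$. Since $f,g\in SX(h,I)$ are nonnegative and $h(\alpha)\ge\alpha$, $h(\beta)\ge\beta$, the right-hand side of (\ref{111}) gives
\[
\alpha f(x)+\beta f(y)\le h(\alpha)f(x)+h(\beta)f(y),\qquad
\alpha g(x)+\beta g(y)\le h(\alpha)g(x)+h(\beta)g(y).
\]
All four quantities are nonnegative, so the product of the two inequalities is legitimate:
\[
[\alpha f(x)+\beta f(y)][\alpha g(x)+\beta g(y)]\le[h(\alpha)f(x)+h(\beta)f(y)][h(\alpha)g(x)+h(\beta)g(y)].
\]
Expanding, the left-hand side equals $\alpha^{2}f(x)g(x)+\alpha\beta\,[f(x)g(y)+f(y)g(x)]+\beta^{2}f(y)g(y)$, and the right-hand side equals $h^{2}(\alpha)f(x)g(x)+h(\alpha)h(\beta)[f(x)g(y)+f(y)g(x)]+h^{2}(\beta)f(y)g(y)$.

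Now I would bound the right-hand side exactly as in the passage leading to (\ref{212}). Since $f$ and $g$ are similarly ordered, $f(x)g(y)+f(y)g(x)\le f(x)g(x)+f(y)g(y)$; substituting this for the cross term (whose coefficient $h(\alpha)h(\beta)$ is nonnegative) and collecting terms rewrites the right-hand side as $h(\alpha)\,[h(\alpha)+h(\beta)]f(x)g(x)+h(\beta)\,[h(\alpha)+h(\beta)]f(y)g(y)$. Superadditivity of $h$ gives $h(\alpha)+h(\beta)\le h(\alpha+\beta)=h(1)$, and as $h(\alpha),h(\beta)\ge0$ this yields the bound $h(1)\,[h(\alpha)f(x)g(x)+h(\beta)f(y)g(y)]$. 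Chaining the inequalities, we arrive at
\[
\alpha^{2}f(x)g(x)+\alpha\beta\,[f(x)g(y)+f(y)g(x)]+\beta^{2}f(y)g(y)\le h(1)\,[h(\alpha)f(x)g(x)+h(\beta)f(y)g(y)].
\]

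Finally, set $x=a$, $y=b$, $\beta=1-\alpha$ and integrate in $\alpha$ over $[0,1]$, using $\int_{0}^{1}\alpha^{2}\,d\alpha=\int_{0}^{1}(1-\alpha)^{2}\,d\alpha=\tfrac13$ and $\int_{0}^{1}\alpha(1-\alpha)\,d\alpha=\tfrac16$. The left-hand side becomes $\tfrac13(f(a)g(a)+f(b)g(b))+\tfrac16(f(a)g(b)+f(b)g(a))=\tfrac{M(a,b)}{3}+\tfrac{N(a,b)}{6}$, and the right-hand side becomes $h(1)\,[f(a)g(a)\int_{0}^{1}h(\alpha)\,d\alpha+f(b)g(b)\int_{0}^{1}h(1-\alpha)\,d\alpha]$, which is precisely (\ref{215}). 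The one step that requires care — and essentially the only obstacle — is the direction of the similarly-ordered substitution: replacing $f(x)g(y)+f(y)g(x)$ by the larger $f(x)g(x)+f(y)g(y)$ is permissible only on the $h$-side, where it produces an \emph{upper} bound, whereas doing it on the left would reverse the inequality. The rest is the bookkeeping already carried out in Theorems \ref{th1} and \ref{th2}, and note that the extra hypothesis $(f,g)(x)\ge x$ appearing in Theorem \ref{th1} plays no role here.
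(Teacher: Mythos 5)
Your proof is correct, and it reaches the paper's key pointwise inequality (\ref{219}) by a genuinely shorter route. The paper insists on the rearrangement inequality (\ref{*}): it forms the cross-product sum (\ref{216}), then separately bounds its left side from below by $2\left[ \alpha^{2}f(x)g(x)+\alpha \beta \left( f(x)g(y)+f(y)g(x)\right) +\beta^{2}f(y)g(y)\right] $ in (\ref{217}) (using $h(\alpha )\geq \alpha $) and its right side from above by $2h(1)\left[ h(\alpha )f(x)g(x)+h(\beta )f(y)g(y)\right] $ in (\ref{218}) (using the similarly-ordered hypothesis, then $\alpha \leq h(\alpha )$ again to absorb the $\left[ \alpha f(x)+\beta f(y)\right] \left[ \alpha g(x)+\beta g(y)\right] $ term, then superadditivity); the two factors of $2$ cancel and (\ref{219}) falls out. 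You instead multiply the two nonnegative inequalities coming from (\ref{111}) directly --- precisely the move (\ref{**}) that the paper announces it will avoid --- and then apply the similarly-ordered substitution and superadditivity only on the $h$-side. Logically the two arguments are equivalent (the paper's detour amounts to proving $AC\leq BD$ from $AD+BC\leq AC+BD$ together with $2AC\leq AD+BC$, rather than from $A\leq B$, $C\leq D$ directly), but yours uses the hypothesis $h(\alpha )\geq \alpha $ once instead of twice and dispenses with the bookkeeping of the doubled terms; for this particular theorem the rearrangement device buys nothing. Your closing observations are also accurate: the similarly-ordered replacement must be performed on the upper-bound side only, and the extra hypothesis $\left( f,g\right) \left( x\right) \geq x$ of Theorem \ref{th1} is not needed here. (Incidentally, your final integration gives the coefficient $\frac{1}{6}$ on the cross term, consistent with (\ref{215}); the spurious factor $2$ appearing in front of $\int_{0}^{1}\alpha \left( 1-\alpha \right) d\alpha $ in the paper's own integration display is a typo, since (\ref{219}) carries the cross term with coefficient $\alpha \beta $, not $2\alpha \beta $.)
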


\begin{proof}
\bigskip As in the proof of the inequality (\ref{209}), since $f,$ $g$ are $%
h-$convex functions on $I$, and using right hand side of (\ref{112}), we
have 
\begin{eqnarray*}
\alpha f\left( x\right) +\beta f\left( y\right) &\leq &h\left( \alpha
\right) f\left( x\right) +h\left( \beta \right) f\left( y\right) \\
\alpha g\left( x\right) +\beta g\left( y\right) &\leq &h\left( \alpha
\right) g\left( x\right) +h\left( \beta \right) g\left( y\right)
\end{eqnarray*}%
for all $\alpha ,\beta \in (0,1),$ $\alpha +\beta =1$. Using the elementary
inequality $\mu \leq \lambda $ and $\kappa \leq \varepsilon $ then $\mu
\varepsilon +\lambda \kappa \leq \mu \kappa +\lambda \varepsilon $ for $\mu
,\lambda ,\kappa ,\varepsilon \in 
\mathbb{R}
,$ so we obtain%
\begin{eqnarray}
&&\left[ \alpha f\left( x\right) +\beta f\left( y\right) \right] \left[
h\left( \alpha \right) g\left( x\right) +h\left( \beta \right) g\left(
y\right) \right]  \notag \\
&&+\left[ \alpha g\left( x\right) +\beta g\left( y\right) \right] \left[
h\left( \alpha \right) f\left( x\right) +h\left( \beta \right) f\left(
y\right) \right]  \notag \\
&\leq &\left[ \alpha f\left( x\right) +\beta f\left( y\right) \right] \left[
\alpha g\left( x\right) +\beta g\left( y\right) \right]  \label{216} \\
&&+\left[ h\left( \alpha \right) f\left( x\right) +h\left( \beta \right)
f\left( y\right) \right] \left[ h\left( \alpha \right) g\left( x\right)
+h\left( \beta \right) g\left( y\right) \right] .  \notag
\end{eqnarray}%
Using the other properties of $h$ in Theorem \ref{th3} on the left hand side
of (\ref{216}), we get%
\begin{eqnarray}
&&\left[ \alpha f\left( x\right) +\beta f\left( y\right) \right] \left[
h\left( \alpha \right) g\left( x\right) +h\left( \beta \right) g\left(
y\right) \right]  \notag \\
&&+\left[ \alpha g\left( x\right) +\beta g\left( y\right) \right] \left[
h\left( \alpha \right) f\left( x\right) +h\left( \beta \right) f\left(
y\right) \right]  \notag \\
&\geq &2\alpha ^{2}f\left( x\right) g\left( x\right) +2\alpha \beta \left[
f\left( x\right) g\left( y\right) +f\left( y\right) g\left( x\right) \right]
+2\beta ^{2}f\left( y\right) g\left( y\right)  \label{217}
\end{eqnarray}%
However, using the other properties of $f,$ $g$ and $h$ in Theorem \ref{th3}
on the right hand side of (\ref{216}), we obtain%
\begin{eqnarray}
&&\left[ \alpha f\left( x\right) +\beta f\left( y\right) \right] \left[
\alpha g\left( x\right) +\beta g\left( y\right) \right]  \label{218} \\
&&+\left[ h\left( \alpha \right) f\left( x\right) +h\left( \beta \right)
f\left( y\right) \right] \left[ h\left( \alpha \right) g\left( x\right)
+h\left( \beta \right) g\left( y\right) \right]  \notag \\
&=&\alpha ^{2}f\left( x\right) g\left( x\right) +\beta ^{2}f\left( y\right)
g\left( y\right) +\alpha \beta \left[ f\left( x\right) g\left( y\right)
+f\left( y\right) g\left( x\right) \right]  \notag \\
&&+h^{2}\left( \alpha \right) f\left( x\right) g\left( x\right) +h^{2}\left(
\beta \right) f\left( y\right) g\left( y\right) +h\left( \alpha \right)
h\left( \beta \right) \left[ f\left( x\right) g\left( y\right) +f\left(
y\right) g\left( x\right) \right]  \notag \\
&\leq &\alpha ^{2}f\left( x\right) g\left( x\right) +\beta ^{2}f\left(
y\right) g\left( y\right) +\alpha \beta \left[ f\left( x\right) g\left(
x\right) +f\left( y\right) g\left( y\right) \right]  \notag \\
&&+h^{2}\left( \alpha \right) f\left( x\right) g\left( x\right) +h^{2}\left(
\beta \right) f\left( y\right) g\left( y\right) +h\left( \alpha \right)
h\left( \beta \right) \left[ f\left( x\right) g\left( x\right) +f\left(
y\right) g\left( y\right) \right]  \notag \\
&\leq &h^{2}\left( \alpha \right) f\left( x\right) g\left( x\right)
+h^{2}\left( \beta \right) f\left( y\right) g\left( y\right) +h\left( \alpha
\right) h\left( \beta \right) \left[ f\left( x\right) g\left( x\right)
+f\left( y\right) g\left( y\right) \right]  \notag \\
&&+h^{2}\left( \alpha \right) f\left( x\right) g\left( x\right) +h^{2}\left(
\beta \right) f\left( y\right) g\left( y\right) +h\left( \alpha \right)
h\left( \beta \right) \left[ f\left( x\right) g\left( x\right) +f\left(
y\right) g\left( y\right) \right]  \notag \\
&=&2\left( h^{2}\left( \alpha \right) +h\left( \alpha \right) h\left( \beta
\right) \right) f\left( x\right) g\left( x\right) +2\left( h^{2}\left( \beta
\right) +h\left( \alpha \right) h\left( \beta \right) \right) f\left(
y\right) g\left( y\right)  \notag \\
&\leq &2h\left( \alpha \right) h\left( \alpha +\beta \right) f\left(
x\right) g\left( x\right) +2h\left( \beta \right) h\left( \alpha +\beta
\right) f\left( y\right) g\left( y\right)  \notag \\
&=&2h\left( \alpha +\beta \right) \left[ h\left( \alpha \right) f\left(
x\right) g\left( x\right) +h\left( \beta \right) f\left( y\right) g\left(
y\right) \right]  \notag \\
&=&2h\left( 1\right) \left[ h\left( \alpha \right) f\left( x\right) g\left(
x\right) +h\left( \beta \right) f\left( y\right) g\left( y\right) \right] 
\notag
\end{eqnarray}%
Then by expression (\ref{217}) and (\ref{218});%
\begin{eqnarray}
&&\alpha ^{2}f\left( x\right) g\left( x\right) +\alpha \beta \left[ f\left(
x\right) g\left( y\right) +f\left( y\right) g\left( x\right) \right] +\beta
^{2}f\left( y\right) g\left( y\right)  \notag \\
&\leq &h\left( 1\right) \left[ h\left( \alpha \right) f\left( x\right)
g\left( x\right) +h\left( \beta \right) f\left( y\right) g\left( y\right) %
\right]  \label{219}
\end{eqnarray}%
by taking $x=a$, $y=b$ and\ $\beta =1-\alpha $ in (\ref{219}), we have%
\begin{eqnarray*}
&&t^{2}f\left( a\right) g\left( a\right) +t\left( 1-t\right) \left[ f\left(
a\right) g\left( b\right) +f\left( b\right) g\left( a\right) \right] +\left(
1-t\right) ^{2}f\left( b\right) g\left( b\right) \\
&\leq &h\left( 1\right) \left[ h\left( \alpha \right) f\left( a\right)
g\left( a\right) +h\left( 1-\alpha \right) f\left( b\right) g\left( b\right) %
\right] .
\end{eqnarray*}%
By integrating the result with respect to $\alpha $ over $[0,1],$ we obtain%
\begin{eqnarray*}
&&f\left( a\right) g\left( a\right) \int_{0}^{1}\alpha ^{2}d\alpha +2\left[
f\left( a\right) g\left( b\right) +f\left( b\right) g\left( a\right) \right]
\int_{0}^{1}\alpha \left( 1-\alpha \right) d\alpha \\
&&+f\left( b\right) g\left( b\right) \int_{0}^{1}\left( 1-\alpha \right)
^{2}d\alpha \\
&=&\frac{f\left( a\right) g\left( a\right) +f\left( b\right) g\left(
b\right) }{3}+\frac{f\left( a\right) g\left( b\right) +f\left( b\right)
g\left( a\right) }{6} \\
&\leq &h\left( 1\right) \left[ h\left( \alpha \right) f\left( a\right)
g\left( a\right) +h\left( 1-\alpha \right) f\left( b\right) g\left( b\right) %
\right] \\
&=&h\left( 1\right) \left[ f\left( a\right) g\left( a\right)
\int_{0}^{1}h\left( \alpha \right) d\alpha +f\left( b\right) g\left(
b\right) \int_{0}^{1}h\left( 1-\alpha \right) d\alpha \right]
\end{eqnarray*}%
which completes the proof of (\ref{215}).
\end{proof}

\begin{theorem}
\bigskip \label{th4}\bigskip Let $f,g\in SX(h,I)$, $h$ is superadditive.
Then for all $\alpha \in (0,1),$ we have following inequality;%
\begin{eqnarray}
&&\left[ f\left( \frac{a+b}{2}\right) \frac{g\left( a\right) +g\left(
b\right) }{2}+g\left( \frac{a+b}{2}\right) \frac{f\left( a\right) +f\left(
b\right) }{2}\right]  \notag \\
&&\times \left[ \int_{0}^{1}h\left( \alpha \right) d\alpha
+\int_{0}^{1}h\left( 1-\alpha \right) d\alpha \right]  \notag \\
&\leq &\left( fg\right) \left( \frac{a+b}{2}\right) +h^{2}\left( 1\right)
\left( M\left( a,b\right) +N\left( a,b\right) \right) .  \label{220}
\end{eqnarray}
\end{theorem}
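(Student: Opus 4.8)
The plan is to reuse the mechanism of the proofs of Theorems \ref{th1}--\ref{th3}: feed $h$-convexity bounds for $f$ and $g$ at the midpoint into the elementary inequality (\ref{*}), collapse the $h$-terms using the superadditivity of $h$, and integrate in $\alpha$ over $[0,1]$. Concretely, for a fixed $\alpha\in(0,1)$ I would use the defining inequality (\ref{106}) with $x=a$, $y=b$, together with its companion obtained by exchanging $\alpha$ and $1-\alpha$, to produce
\[
f\!\left(\frac{a+b}{2}\right)\le\bigl[h(\alpha)+h(1-\alpha)\bigr]\frac{f(a)+f(b)}{2},\qquad g\!\left(\frac{a+b}{2}\right)\le\bigl[h(\alpha)+h(1-\alpha)\bigr]\frac{g(a)+g(b)}{2}.
\]

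Next I apply (\ref{*}) with $\mu=f\!\left(\frac{a+b}{2}\right)$, $\kappa=g\!\left(\frac{a+b}{2}\right)$, $\lambda=\bigl[h(\alpha)+h(1-\alpha)\bigr]\frac{f(a)+f(b)}{2}$ and $\varepsilon=\bigl[h(\alpha)+h(1-\alpha)\bigr]\frac{g(a)+g(b)}{2}$; this is legitimate because $\mu\le\lambda$ and $\kappa\le\varepsilon$ are precisely the two bounds just displayed. Since $\mu\varepsilon+\lambda\kappa=\bigl[h(\alpha)+h(1-\alpha)\bigr]\bigl[f(\frac{a+b}{2})\frac{g(a)+g(b)}{2}+g(\frac{a+b}{2})\frac{f(a)+f(b)}{2}\bigr]$ and $\mu\kappa+\lambda\varepsilon=(fg)(\frac{a+b}{2})+\lambda\varepsilon$, inequality (\ref{*}) gives
\[
\bigl[h(\alpha)+h(1-\alpha)\bigr]\Bigl[f\!\left(\frac{a+b}{2}\right)\frac{g(a)+g(b)}{2}+g\!\left(\frac{a+b}{2}\right)\frac{f(a)+f(b)}{2}\Bigr]\le (fg)\!\left(\frac{a+b}{2}\right)+\lambda\varepsilon .
\]
Using the identity $(f(a)+f(b))(g(a)+g(b))=M(a,b)+N(a,b)$ one gets $\lambda\varepsilon=\frac14\bigl[h(\alpha)+h(1-\alpha)\bigr]^2\bigl(M(a,b)+N(a,b)\bigr)$.

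Now superadditivity enters: $h(\alpha)+h(1-\alpha)\le h(\alpha+1-\alpha)=h(1)$, and since $h$ is nonnegative this yields $\lambda\varepsilon\le h^2(1)\bigl(M(a,b)+N(a,b)\bigr)$, a quantity free of $\alpha$. Substituting this bound into the last display and integrating both sides with respect to $\alpha$ over $[0,1]$ — the right-hand side being constant in $\alpha$ — turns the coefficient $\int_0^1\bigl[h(\alpha)+h(1-\alpha)\bigr]\,d\alpha$ into $\int_0^1 h(\alpha)\,d\alpha+\int_0^1 h(1-\alpha)\,d\alpha$, which is exactly the assertion (\ref{220}).

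I expect the only delicate point to be the very first step, namely arranging the two $h$-convexity estimates for $f(\frac{a+b}{2})$ and $g(\frac{a+b}{2})$ so that they carry the single common factor $h(\alpha)+h(1-\alpha)$, since this is what forces the left-hand side of (\ref{*}) to reproduce precisely the bracket in (\ref{220}); it is here that the symmetrization $\alpha\leftrightarrow1-\alpha$ is used. Everything afterward is the by-now-familiar combination of (\ref{*}), the identity $(f(a)+f(b))(g(a)+g(b))=M(a,b)+N(a,b)$, and the single inequality $h(\alpha)+h(1-\alpha)\le h(1)$ coming from superadditivity — in particular neither the growth condition $h(\alpha)\ge\alpha$ nor the similarly-ordered hypothesis is invoked, in accordance with the lighter hypotheses of Theorem \ref{th4}.
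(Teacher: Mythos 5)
Your proposal follows essentially the same route as the paper's own proof: the symmetrized midpoint bound $f\left(\frac{a+b}{2}\right)\leq\left[h(\alpha)+h(1-\alpha)\right]\frac{f(a)+f(b)}{2}$ (and its analogue for $g$), the elementary inequality (\ref{*}), the superadditivity estimate $h(\alpha)+h(1-\alpha)\leq h(1)$, and integration over $[0,1]$, together with the harmless discarding of the factor $\frac{1}{4}$ at the end. The only caveat --- shared equally by the paper --- is that the first step quietly uses $f\left(\frac{u+v}{2}\right)\leq\frac{f(u)+f(v)}{2}$ for $u=\alpha a+(1-\alpha)b$ and $v=(1-\alpha)a+\alpha b$, which for a merely $h$-convex $f$ requires $h\left(\frac{1}{2}\right)\leq\frac{1}{2}$, a condition not among the stated hypotheses.
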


\begin{proof}
\bigskip Since $f,g\in SX\left( h,I\right) ,$ we have%
\begin{eqnarray*}
f\left( \frac{a+b}{2}\right) &=&f\left( \frac{\alpha a+\left( 1-\alpha
\right) b}{2}+\frac{\left( 1-\alpha \right) a+\alpha b}{2}\right) \\
&\leq &\frac{f\left( \alpha a+\left( 1-\alpha \right) b\right) +f\left(
\left( 1-\alpha \right) a+\alpha b\right) }{2} \\
&\leq &\frac{h\left( \alpha \right) f\left( a\right) +h\left( 1-\alpha
\right) f\left( b\right) +h\left( 1-\alpha \right) f\left( a\right) +h\left(
\alpha \right) f\left( b\right) }{2} \\
&=&\frac{\left[ h\left( \alpha \right) +h\left( 1-\alpha \right) \right] %
\left[ f\left( a\right) +f\left( b\right) \right] }{2} \\
g\left( \frac{a+b}{2}\right) &\leq &\frac{\left[ h\left( \alpha \right)
+h\left( 1-\alpha \right) \right] \left[ g\left( a\right) +g\left( b\right) %
\right] }{2}
\end{eqnarray*}%
for all $\alpha \in (0,1)$. Using the elementary inequality $\mu \leq
\lambda $ and $\kappa \leq \varepsilon $ then $\mu \varepsilon +\lambda
\kappa \leq \mu \kappa +\lambda \varepsilon $ for $\mu ,\lambda ,\kappa
,\varepsilon \in 
\mathbb{R}
,$ so we get%
\begin{eqnarray*}
&&f\left( \frac{a+b}{2}\right) \frac{\left[ h\left( \alpha \right) +h\left(
1-\alpha \right) \right] \left[ g\left( a\right) +g\left( b\right) \right] }{%
2} \\
&&+g\left( \frac{a+b}{2}\right) \frac{\left[ h\left( \alpha \right) +h\left(
1-\alpha \right) \right] \left[ f\left( a\right) +f\left( b\right) \right] }{%
2} \\
&=&\left[ f\left( \frac{a+b}{2}\right) \frac{g\left( a\right) +g\left(
b\right) }{2}+g\left( \frac{a+b}{2}\right) \frac{f\left( a\right) +f\left(
b\right) }{2}\right] \left[ h\left( \alpha \right) +h\left( 1-\alpha \right) %
\right] \\
&\leq &f\left( \frac{a+b}{2}\right) g\left( \frac{a+b}{2}\right) +\frac{%
\left[ h\left( \alpha \right) +h\left( 1-\alpha \right) \right] ^{2}\left[
f\left( a\right) +f\left( b\right) \right] \left[ g\left( a\right) +g\left(
b\right) \right] }{4} \\
&\leq &f\left( \frac{a+b}{2}\right) g\left( \frac{a+b}{2}\right) +\frac{%
\left[ h\left( 1\right) \right] ^{2}\left[ f\left( a\right) +f\left(
b\right) \right] \left[ g\left( a\right) +g\left( b\right) \right] }{4},
\end{eqnarray*}%
by integrating the result with respect to $\alpha $ over $[0,1],$ we obtain (%
\ref{220}).
\end{proof}

\begin{theorem}
\bigskip \label{th5}\bigskip Let $f,$ $g\in SX(h,I)$, $h$ is
supermultiplicative and . Then for all $\alpha \in (0,1),$ we have following
inequality;%
\begin{eqnarray}
&&\frac{2f\left( \frac{a+b}{2}\right) g\left( \frac{a+b}{2}\right) }{M\left(
a,b\right) }  \notag \\
&\leq &\left[ \int_{0}^{1}h\left( \alpha ^{2}\right) d\alpha
+2\int_{0}^{1}h\left( \alpha -\alpha ^{2}\right) d\alpha
+\int_{0}^{1}h\left( \left( 1-\alpha \right) ^{2}\right) d\alpha \right] .
\label{221}
\end{eqnarray}
\end{theorem}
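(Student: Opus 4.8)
The plan is to run the midpoint argument from the proof of Theorem \ref{th4}, but to multiply the $h$-convexity estimates for $f$ and $g$ \emph{before} integrating and then to absorb the resulting squared factor $[h(\alpha)+h(1-\alpha)]^{2}$ by supermultiplicativity of $h$. Throughout I assume — as the other theorems of this section do, and as the dangling hypothesis ``and'' in the statement surely intends — that $f$ and $g$ are similarly ordered on $I$; this is exactly what the last step needs. I also assume $M(a,b)>0$, so the final division makes sense (if $M(a,b)=0$, similar ordering forces $N(a,b)=0$ too, and the asserted bound degenerates to the trivially true $2(fg)\big(\tfrac{a+b}{2}\big)\le 0$).

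First I would write $\frac{a+b}{2}=\tfrac12\big(\alpha a+(1-\alpha)b\big)+\tfrac12\big((1-\alpha)a+\alpha b\big)$ and, exactly as in the proof of Theorem \ref{th4}, obtain for every $\alpha\in(0,1)$ that
\[
f\left(\frac{a+b}{2}\right)\le\frac{[h(\alpha)+h(1-\alpha)]\,[f(a)+f(b)]}{2},\qquad
g\left(\frac{a+b}{2}\right)\le\frac{[h(\alpha)+h(1-\alpha)]\,[g(a)+g(b)]}{2}.
\]
Both sides of each inequality are nonnegative ($f,g\ge 0$ and $h>0$), so the two may be multiplied:
\[
(fg)\left(\frac{a+b}{2}\right)\le\frac{[h(\alpha)+h(1-\alpha)]^{2}\,[f(a)+f(b)]\,[g(a)+g(b)]}{4}.
\]

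Next I would expand $[h(\alpha)+h(1-\alpha)]^{2}=h^{2}(\alpha)+2h(\alpha)h(1-\alpha)+h^{2}(1-\alpha)$ and apply supermultiplicativity term by term: $h^{2}(\alpha)\le h(\alpha^{2})$, $h^{2}(1-\alpha)\le h\big((1-\alpha)^{2}\big)$, and $h(\alpha)h(1-\alpha)\le h\big(\alpha(1-\alpha)\big)=h(\alpha-\alpha^{2})$, so that $[h(\alpha)+h(1-\alpha)]^{2}\le h(\alpha^{2})+2h(\alpha-\alpha^{2})+h\big((1-\alpha)^{2}\big)$. At the same time $[f(a)+f(b)][g(a)+g(b)]=M(a,b)+N(a,b)$, and similar ordering gives $(f(a)-f(b))(g(a)-g(b))\ge 0$, i.e.\ $N(a,b)\le M(a,b)$, whence $M(a,b)+N(a,b)\le 2M(a,b)$. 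Feeding these two estimates into the last display gives $(fg)\big(\tfrac{a+b}{2}\big)\le\frac{M(a,b)}{2}\big[h(\alpha^{2})+2h(\alpha-\alpha^{2})+h((1-\alpha)^{2})\big]$; since the left side is constant in $\alpha$, integrating over $[0,1]$ and multiplying by $2/M(a,b)$ yields precisely (\ref{221}). The computation is routine; the only delicate point is the very first step, the bound $f\big(\tfrac{u+v}{2}\big)\le\frac{f(u)+f(v)}{2}$, which is not a consequence of $h$-convexity alone — I would use it in exactly the form already employed (and accepted) in the proof of Theorem \ref{th4}, keeping this argument consistent with the rest of the section.
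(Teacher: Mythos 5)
Your proposal follows the paper's own proof essentially verbatim: multiply the two midpoint estimates for $f$ and $g$, expand $[h(\alpha)+h(1-\alpha)]^{2}$, apply supermultiplicativity term by term, use similar ordering to replace $M(a,b)+N(a,b)$ by $2M(a,b)$, and integrate over $\alpha$. You have moreover correctly reconstructed the hypothesis the truncated statement omits (that $f$ and $g$ be similarly ordered, which the paper's proof does invoke) and flagged the same delicate midpoint bound $f\left(\tfrac{u+v}{2}\right)\le\tfrac{f(u)+f(v)}{2}$ that the paper uses without further justification.
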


\begin{proof}
\bigskip Since $f,g\in SX(h,I)$, we write%
\begin{eqnarray*}
f\left( \frac{a+b}{2}\right) &\leq &\left[ h\left( \alpha \right) +h\left(
1-\alpha \right) \right] \frac{f\left( a\right) +f\left( b\right) }{2} \\
g\left( \frac{a+b}{2}\right) &\leq &\left[ h\left( \alpha \right) +h\left(
1-\alpha \right) \right] \frac{g\left( a\right) +g\left( b\right) }{2}
\end{eqnarray*}%
for all $\alpha \in (0,1)$. Since $h$ is supermultiplicative function and $f$
and $g$ are similarly ordered functions, we get%
\begin{eqnarray*}
&&f\left( \frac{a+b}{2}\right) g\left( \frac{a+b}{2}\right) \\
&\leq &\frac{\left[ h\left( \alpha \right) +h\left( 1-\alpha \right) \right]
^{2}}{4}\left( f\left( a\right) +f\left( b\right) \right) \left( g\left(
a\right) +g\left( b\right) \right) \\
&=&\frac{\left[ h^{2}\left( \alpha \right) +2h\left( \alpha \right) h\left(
1-\alpha \right) +h^{2}\left( 1-\alpha \right) \right] ^{2}}{4} \\
&&\text{x}\left( f\left( a\right) g\left( a\right) +f\left( b\right) g\left(
b\right) +f\left( a\right) g\left( b\right) +f\left( b\right) g\left(
a\right) \right) \\
&\leq &\frac{\left[ h\left( \alpha ^{2}\right) +2h\left( \alpha \left(
1-\alpha \right) \right) +h\left( \left( 1-\alpha \right) ^{2}\right) \right]
^{2}\left( f\left( a\right) g\left( a\right) +f\left( b\right) g\left(
b\right) \right) }{2}
\end{eqnarray*}%
by integrating the result with respect to $\alpha $ over $[0,1],$ we obtain (%
\ref{221}).
\end{proof}

\begin{corollary}
\bigskip If in (\ref{221}) we take $h(\alpha )=1$, then we obtain an
integral inequality for $P-$functions with launching of necessary
mathematical operations,%
\begin{equation*}
f\left( \frac{a+b}{2}\right) g\left( \frac{a+b}{2}\right) \leq 2M\left(
a,b\right)
\end{equation*}
\end{corollary}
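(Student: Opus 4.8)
The plan is to read this corollary off from Theorem~\ref{th5} by the single substitution $h\equiv 1$, so that the only genuine content is (i) checking that the constant function $h(\alpha)=1$ is an admissible choice in Theorem~\ref{th5} and that a $P$-function is an $h$-convex function for this $h$, and (ii) evaluating the three elementary integrals that appear on the right-hand side of (\ref{221}).

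For step (i) I would first observe that $h(\alpha)=1$ is nonnegative and supermultiplicative, since $h(xy)=1=h(x)h(y)$ for all $x,y$ (indeed it is multiplicative), so it meets the hypothesis imposed on $h$ in Theorem~\ref{th5}. Next, by the observation recorded in the Introduction right after the definition of the class $SX(h,I)$ --- namely that $SX(h,I)\supseteq P(I)$ when $h(t)=1$ --- every nonnegative $P$-function on $I$ belongs to $SX(1,I)$. Hence if $f,g$ are similarly ordered $P$-functions on $I$ (the similar ordering being exactly what is used in the proof of Theorem~\ref{th5}), they satisfy all the requirements of that theorem with $h\equiv 1$, and (\ref{221}) is available to us.

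For step (ii), substituting $h\equiv 1$ makes each integrand in (\ref{221}) identically equal to $1$, so
\[
\int_{0}^{1}h\left(\alpha^{2}\right)d\alpha=\int_{0}^{1}h\left(\alpha-\alpha^{2}\right)d\alpha=\int_{0}^{1}h\left(\left(1-\alpha\right)^{2}\right)d\alpha=1,
\]
and therefore the bracketed quantity on the right of (\ref{221}) equals $1+2\cdot 1+1=4$. Consequently
\[
\frac{2f\left(\frac{a+b}{2}\right)g\left(\frac{a+b}{2}\right)}{M\left(a,b\right)}\le 4,
\]
and multiplying both sides by $M(a,b)/2$ yields $f\left(\frac{a+b}{2}\right)g\left(\frac{a+b}{2}\right)\le 2M(a,b)$, which is exactly the asserted inequality.

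There is essentially no obstacle here; the one point that needs a word of care is the division by $M(a,b)=f(a)g(a)+f(b)g(b)$, which one should take to be strictly positive (as is already implicit in the statement of Theorem~\ref{th5}) --- in the degenerate case $M(a,b)=0$ the estimate is either vacuous or must be handled as a limiting case. Apart from that, the argument is just the bookkeeping of trivial integrals and a rearrangement, so nothing delicate arises.
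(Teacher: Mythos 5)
Your proposal is correct and follows exactly the route the paper intends: substitute $h\equiv 1$ into (\ref{221}), note each of the three integrals equals $1$ so the bracket is $4$, and rearrange. Your added checks (that the constant function is supermultiplicative, that $P(I)\subseteq SX(1,I)$, and that the similar-ordering hypothesis used in the proof of Theorem~\ref{th5} must be carried over) are sensible housekeeping the paper leaves implicit.
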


\begin{corollary}
\bigskip If in (\ref{221}) we take $h(\alpha )=\alpha $, then we obtain an
integral inequality for ordinary convex functions with launching of
necessary mathematical operations,%
\begin{equation*}
f\left( \frac{a+b}{2}\right) g\left( \frac{a+b}{2}\right) \leq \frac{M\left(
a,b\right) }{2}.
\end{equation*}
\end{corollary}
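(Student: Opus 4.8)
The plan is to read off the statement as the special case $h(\alpha)=\alpha$ of Theorem \ref{th5}. First I would check that this choice of $h$ is admissible for Theorem \ref{th5}: the identity function $h(t)=t$ satisfies $h(xy)=xy=h(x)h(y)$, so it is multiplicative, hence in particular supermultiplicative in the sense of (\ref{107}); the remaining hypotheses on $f$ and $g$ are simply inherited from the statement of Theorem \ref{th5}. Therefore inequality (\ref{221}) holds verbatim with $h(\alpha)$ replaced by $\alpha$ throughout.

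It then remains only to evaluate the three elementary integrals in the bracket on the right-hand side of (\ref{221}). With $h(\alpha)=\alpha$,
\[
\int_{0}^{1}h(\alpha^{2})\,d\alpha=\int_{0}^{1}\alpha^{2}\,d\alpha=\frac{1}{3},\qquad
\int_{0}^{1}h\bigl((1-\alpha)^{2}\bigr)\,d\alpha=\int_{0}^{1}(1-\alpha)^{2}\,d\alpha=\frac{1}{3},
\]
and
\[
2\int_{0}^{1}h(\alpha-\alpha^{2})\,d\alpha=2\int_{0}^{1}(\alpha-\alpha^{2})\,d\alpha=2\left(\frac{1}{2}-\frac{1}{3}\right)=\frac{1}{3},
\]
so the bracket equals $\frac{1}{3}+\frac{1}{3}+\frac{1}{3}=1$. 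Substituting into (\ref{221}) gives
\[
\frac{2f\!\left(\frac{a+b}{2}\right)g\!\left(\frac{a+b}{2}\right)}{M(a,b)}\le 1,
\]
and multiplying both sides by $\frac{1}{2}M(a,b)$ (which is positive) yields $f\!\left(\frac{a+b}{2}\right)g\!\left(\frac{a+b}{2}\right)\le \frac{M(a,b)}{2}$, the claimed inequality.

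I do not expect any real obstacle: the entire content is the arithmetic fact that these three Beta-type integrals sum to $1$ when $h$ is the identity, together with the trivial observation that the identity function is supermultiplicative so that Theorem \ref{th5} genuinely applies. The only point worth keeping in mind is the standing positivity $M(a,b)>0$ already implicit in Theorem \ref{th5}, which is what makes the final division and multiplication legitimate.
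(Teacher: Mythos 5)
Your proposal is correct and is exactly the computation the paper intends: substitute $h(\alpha)=\alpha$ into (\ref{221}), note that each of the three integrals equals $\tfrac{1}{3}$ so the bracket is $1$, and multiply through by $\tfrac{1}{2}M(a,b)$. The paper gives no separate proof beyond this routine evaluation, so your argument matches its route in full.
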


\begin{corollary}
\bigskip If in (\ref{221}) we take $h(\alpha )=\alpha ^{s}$, then we obtain
an integral inequality for $s-$convex functions in the second sense with the
use of the Beta function of Euler type%
\begin{eqnarray*}
&&f\left( \frac{a+b}{2}\right) g\left( \frac{a+b}{2}\right) \\
&\leq &\frac{M\left( a,b\right) }{2}\left[ \int_{0}^{1}\alpha ^{2s}d\alpha
+2\int_{0}^{1}\alpha ^{s}\left( 1-\alpha \right) ^{s}d\alpha
+\int_{0}^{1}\left( 1-\alpha \right) ^{2s}d\alpha \right] \\
&=&\frac{M\left( a,b\right) }{2}\left\{ \beta \left( 2s+1,1\right) +2\beta
\left( 2s+1,2s+1\right) +\beta \left( 1,2s+1\right) \right\} \\
&=&M\left( a,b\right) \left\{ \beta \left( 2s+1,1\right) +\beta \left(
2s+1,2s+1\right) \right\} .
\end{eqnarray*}
\end{corollary}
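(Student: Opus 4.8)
The plan is to read the claimed chain of relations as a direct specialization of Theorem \ref{th5}. First I would check that the choice $h(\alpha)=\alpha^{s}$ with $s\in(0,1]$ is admissible: since $(xy)^{s}=x^{s}y^{s}$, this $h$ is in fact multiplicative, hence supermultiplicative, so the hypotheses of Theorem \ref{th5} are satisfied and inequality (\ref{221}) may be applied verbatim. I would also note that $M(a,b)=f(a)g(a)+f(b)g(b)>0$ under the standing nonnegativity of $f,g$, so multiplying an inequality through by $M(a,b)/2$ preserves its direction.

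Next I would substitute $h(\alpha)=\alpha^{s}$ into the three integrands on the right of (\ref{221}). Using the composition rules $h(\alpha^{2})=\alpha^{2s}$, $h(\alpha-\alpha^{2})=\bigl(\alpha(1-\alpha)\bigr)^{s}=\alpha^{s}(1-\alpha)^{s}$, and $h\bigl((1-\alpha)^{2}\bigr)=(1-\alpha)^{2s}$, inequality (\ref{221}) turns into
\[
\frac{2f\left(\frac{a+b}{2}\right)g\left(\frac{a+b}{2}\right)}{M(a,b)}\leq\int_{0}^{1}\alpha^{2s}\,d\alpha+2\int_{0}^{1}\alpha^{s}(1-\alpha)^{s}\,d\alpha+\int_{0}^{1}(1-\alpha)^{2s}\,d\alpha ,
\]
and multiplying through by $M(a,b)/2$ produces the first displayed line of the corollary.

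The remaining work is to evaluate the three integrals via the Euler Beta function $\beta(p,q)=\int_{0}^{1}t^{p-1}(1-t)^{q-1}\,dt$. Matching exponents gives $\int_{0}^{1}\alpha^{2s}\,d\alpha=\beta(2s+1,1)$ and $\int_{0}^{1}(1-\alpha)^{2s}\,d\alpha=\beta(1,2s+1)$ at once, and the symmetry $\beta(2s+1,1)=\beta(1,2s+1)$ then lets me fold the two boundary terms into a single $2\beta(2s+1,1)$, so that after cancelling the factor $1/2$ the last line $M(a,b)\{\beta(2s+1,1)+\beta(2s+1,2s+1)\}$ emerges.

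The one genuine obstacle is the middle term. The Beta-function identification of $\int_{0}^{1}\alpha^{s}(1-\alpha)^{s}\,d\alpha$ forces $p-1=q-1=s$, i.e. $p=q=s+1$, so the natural value is $\beta(s+1,s+1)$, not the $\beta(2s+1,2s+1)$ written in the statement; the latter would instead equal $\int_{0}^{1}\alpha^{2s}(1-\alpha)^{2s}\,d\alpha$. I would therefore treat the recorded $\beta(2s+1,2s+1)$ as a typographical slip and carry the argument through with the corrected middle factor $\beta(s+1,s+1)$, which is consistent with the analogous evaluation already used in the corollary to Theorem \ref{th1}; every other step is routine and purely mechanical.
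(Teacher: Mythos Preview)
Your proposal is correct and follows exactly the intended route: the paper gives no separate proof of this corollary, so the argument is precisely the direct substitution of $h(\alpha)=\alpha^{s}$ into (\ref{221}) followed by the Beta-function identifications, which is what you carry out. Your observation about the middle term is also on target: $\int_{0}^{1}\alpha^{s}(1-\alpha)^{s}\,d\alpha=\beta(s+1,s+1)$, and the printed $\beta(2s+1,2s+1)$ is indeed a slip, consistent with the correct value $\beta(s+1,s+1)$ used earlier in the corollary to Theorem~\ref{th1}.
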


\begin{theorem}
\bigskip \bigskip \label{th6}\bigskip Let $I=\left[ a,b\right] \subseteq 
\mathbb{R}
,$ $f,$ $g\in SX(h,I)$, $h$ is supermultiplicative and $f,$ $g$ are
symmetric about $\frac{a+b}{2}$, then for all $x\in I$ ad $\alpha \in \left[
0,1\right] ,$ we have the following inequalities;%
\begin{eqnarray}
&&\frac{1}{b-a}\int_{a}^{b}f\left( x\right) g\left( x\right) dx  \notag \\
&\leq &\frac{M\left( a,b\right) +N\left( a,b\right) }{4}  \label{222} \\
&&\times \left[ \int_{0}^{1}h\left( \alpha ^{2}\right) d\alpha
+2\int_{0}^{1}h\left( \alpha \left( 1-\alpha \right) \right) d\alpha
+\int_{0}^{1}h\left( \left( 1-\alpha \right) ^{2}\right) d\alpha \right] 
\notag
\end{eqnarray}
\end{theorem}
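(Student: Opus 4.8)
The plan is to establish a pointwise estimate for $(fg)(x)$ on $[a,b]$ coming from the symmetry and $h$-convexity hypotheses, and then to integrate it. First I would use the parametrization $x=\alpha a+(1-\alpha)b$ with $\alpha\in[0,1]$, under which $a+b-x=(1-\alpha)a+\alpha b$. Since $f$ is symmetric about $\frac{a+b}{2}$, i.e. $f(x)=f(a+b-x)$, applying the defining inequality (\ref{106}) to the two representations of $x$ gives $f(x)\le h(\alpha)f(a)+h(1-\alpha)f(b)$ and also $f(x)\le h(1-\alpha)f(a)+h(\alpha)f(b)$; adding these and halving yields $f(x)\le\frac{1}{2}[h(\alpha)+h(1-\alpha)][f(a)+f(b)]$, and in exactly the same way $g(x)\le\frac{1}{2}[h(\alpha)+h(1-\alpha)][g(a)+g(b)]$.

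Next, since $f,g\ge 0$, I would multiply these two bounds to obtain $(fg)(x)\le\frac{[h(\alpha)+h(1-\alpha)]^{2}}{4}(f(a)+f(b))(g(a)+g(b))$, and then use the identity $(f(a)+f(b))(g(a)+g(b))=M(a,b)+N(a,b)$. Expanding the square and invoking supermultiplicativity (\ref{107}) termwise, namely $h(\alpha)^2\le h(\alpha^2)$, $h(\alpha)h(1-\alpha)\le h(\alpha(1-\alpha))$ and $h(1-\alpha)^2\le h((1-\alpha)^2)$, upgrades the estimate to
\[
(fg)(\alpha a+(1-\alpha)b)\le\frac{M(a,b)+N(a,b)}{4}\left[h(\alpha^2)+2h(\alpha(1-\alpha))+h((1-\alpha)^2)\right].
\]

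Finally I would integrate over $x\in[a,b]$. The substitution $x=\alpha a+(1-\alpha)b$ (so that $dx=-(b-a)\,d\alpha$ and the limits interchange) converts $\frac{1}{b-a}\int_a^b f(x)g(x)\,dx$ into $\int_0^1(fg)(\alpha a+(1-\alpha)b)\,d\alpha$; bounding the integrand by the previous display and pulling the constant $\frac{M(a,b)+N(a,b)}{4}$ out of the integral produces precisely the three $\alpha$-integrals on the right-hand side of (\ref{222}). The step needing the most care is this last one: the running parameter $\alpha$ in the pointwise estimate must be identified with the change-of-variables parameter, so that no spurious averaging factor is introduced. Apart from that bookkeeping there is no genuine obstacle; I note in particular that, in contrast with the neighbouring theorems, no similar-ordering assumption on $f$ and $g$ is actually used here.
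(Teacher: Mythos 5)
Your proposal is correct and follows essentially the same route as the paper: bound $f$ and $g$ at $x=\alpha a+(1-\alpha)b$ via $h$-convexity together with the symmetry about $\frac{a+b}{2}$, multiply the two nonnegative bounds, expand $(f(a)+f(b))(g(a)+g(b))=M(a,b)+N(a,b)$, apply supermultiplicativity termwise in the correct direction $h(\alpha)h(\beta)\leq h(\alpha\beta)$, and integrate over $\alpha\in[0,1]$ with the change of variables $x=\alpha a+(1-\alpha)b$. The only cosmetic difference is that the paper first adds the values of $f$ (and of $g$) at the two symmetric points and then uses symmetry to identify the product of the two sums with $4f(x)g(x)$, whereas you use symmetry at the outset to average the two $h$-convexity estimates; your closing observation that no similar-ordering hypothesis is needed is also accurate.
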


\begin{proof}
\bigskip Since $f,g\in SX(h,I)$, we can write%
\begin{eqnarray*}
f\left( \alpha a+\left( 1-\alpha \right) b\right) +f\left( \left( 1-\alpha
\right) a+\alpha b\right) &\leq &\left[ h\left( \alpha \right) +h\left(
1-\alpha \right) \right] \left[ f\left( a\right) +f\left( b\right) \right] \\
g\left( \alpha a+\left( 1-\alpha \right) b\right) +g\left( \left( 1-\alpha
\right) a+\alpha b\right) &\leq &\left[ h\left( \alpha \right) +h\left(
1-\alpha \right) \right] \left[ g\left( a\right) +g\left( b\right) \right]
\end{eqnarray*}%
for all $\alpha \in \left[ 0,1\right] $. Since $h$ is a supermultiplicative
function, and $f$ and $g$ are symmetric about $\frac{a+b}{2}$, we get%
\begin{eqnarray*}
&&\left[ f\left( \alpha a+\left( 1-\alpha \right) b\right) +f\left( \left(
1-\alpha \right) a+\alpha b\right) \right] \left[ g\left( \alpha a+\left(
1-\alpha \right) b\right) +g\left( \left( 1-\alpha \right) a+\alpha b\right) %
\right] \\
&=&4f\left( \alpha a+\left( 1-\alpha \right) b\right) g\left( \alpha
a+\left( 1-\alpha \right) b\right) \\
&\leq &\left[ h\left( \alpha \right) +h\left( 1-\alpha \right) \right] ^{2}%
\left[ f\left( a\right) +f\left( b\right) \right] \left[ g\left( a\right)
+g\left( b\right) \right] \\
&=&\left[ h^{2}\left( \alpha \right) +2h\left( \alpha \right) h\left(
1-\alpha \right) +h^{2}\left( 1-\alpha \right) \right] \left[ M\left(
a,b\right) +N\left( a,b\right) \right] \\
&\leq &\left[ h\left( \alpha ^{2}\right) +2h\left( \alpha \left( 1-\alpha
\right) \right) +h\left( \left( 1-\alpha \right) ^{2}\right) \right] \left[
M\left( a,b\right) +N\left( a,b\right) \right] .
\end{eqnarray*}%
By integrating the result with respect to $\alpha $ over $[0,1],$ and taking
into account the change of variable $x=\alpha a+\left( 1-\alpha \right) b,$
we obtain%
\begin{eqnarray*}
&&\frac{4}{b-a}\int_{a}^{b}f\left( x\right) g\left( x\right) dx \\
&\leq &\left[ M\left( a,b\right) +N\left( a,b\right) \right] \left[
\int_{0}^{1}h\left( \alpha ^{2}\right) d\alpha +2\int_{0}^{1}h\left( \alpha
\left( 1-\alpha \right) \right) d\alpha +\int_{0}^{1}h\left( \left( 1-\alpha
\right) ^{2}\right) d\alpha \right]
\end{eqnarray*}%
which completes the proof.
\end{proof}

\begin{corollary}
\bigskip If in (\ref{222}) we take $h(\alpha )=1$, then we obtain an
integral inequality for $P-$functions with launching of necessary
mathematical operations,%
\begin{equation*}
\frac{1}{b-a}\int_{a}^{b}f\left( x\right) g\left( x\right) dx\leq M\left(
a,b\right) +N\left( a,b\right) .
\end{equation*}
\end{corollary}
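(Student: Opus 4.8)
The plan is to obtain this directly from Theorem~\ref{th6} by specialising $h(\alpha)=1$, so that almost all the work has already been carried out in the proof of (\ref{222}); what remains is to check that the constant $h\equiv 1$ meets the hypotheses of that theorem and then to evaluate the three integrals appearing on its right-hand side.

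First I would verify admissibility. The constant function $h(\alpha)=1$ is supermultiplicative — indeed multiplicative — since $h(xy)=1=1\cdot 1=h(x)h(y)$ for all $x,y$, so the structural assumption on $h$ in Theorem~\ref{th6} is satisfied. Moreover, as recorded just after the definition of the class $SX(h,I)$, the choice $h(t)=1$ gives $SX(h,I)\supseteq P(I)$, so every pair of nonnegative $P$-functions $f,g$ that are symmetric about $\tfrac{a+b}{2}$ lies within the scope of (\ref{222}). This is the step to state explicitly, since it is what licenses interpreting the resulting bound as an inequality for $P$-functions rather than as a vacuous instance of $h$-convexity.

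Next I would perform the substitution. With $h\equiv 1$ each integrand in the bracket of (\ref{222}) is identically $1$, whence
\[
\int_{0}^{1}h\left( \alpha ^{2}\right) d\alpha=\int_{0}^{1}h\left( \alpha \left( 1-\alpha \right) \right) d\alpha=\int_{0}^{1}h\left( \left( 1-\alpha \right) ^{2}\right) d\alpha=1,
\]
so that the bracketed factor equals $1+2\cdot 1+1=4$. This $4$ cancels the prefactor $\tfrac{1}{4}$, and (\ref{222}) collapses to
\[
\frac{1}{b-a}\int_{a}^{b}f\left( x\right) g\left( x\right) dx\leq M\left( a,b\right) +N\left( a,b\right),
\]
which is exactly the assertion of the corollary.

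There is no genuine obstacle here: the entire analytic content is inherited from Theorem~\ref{th6}, and the only active steps are the one-line verification that $h\equiv 1$ is (super)multiplicative and the trivial evaluation of three integrals of the constant $1$. The single point deserving a moment's attention is the bookkeeping that constant-$h$ convexity reduces to membership in $P(I)$, so that the hypotheses of Theorem~\ref{th6} are genuinely met and the conclusion is stated in the correct class of functions.
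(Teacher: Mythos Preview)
Your proof is correct and follows exactly the route the paper intends: the corollary is stated immediately after Theorem~\ref{th6} with the instruction to take $h(\alpha)=1$ and carry out the ``necessary mathematical operations,'' which is precisely the supermultiplicativity check and the evaluation of the three trivial integrals you perform. There is nothing to add.
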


\begin{corollary}
\bigskip \bigskip If in (\ref{222}) we take $h(\alpha )=\alpha $, then we
obtain an integral inequality for ordinary convex functions with the use of
the necessary mathematical operations,%
\begin{equation*}
\frac{1}{b-a}\int_{a}^{b}f\left( x\right) g\left( x\right) dx\leq \frac{%
M\left( a,b\right) +N\left( a,b\right) }{4}.
\end{equation*}
\end{corollary}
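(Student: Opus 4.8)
The plan is to specialize the general $h$-convex inequality (\ref{222}) of Theorem \ref{th6} to the choice $h(\alpha)=\alpha$ and then collapse the bracket factor on the right-hand side by elementary integration.

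First I would verify that the linear function $h(\alpha)=\alpha$ is admissible in Theorem \ref{th6}. It is nonnegative on $(0,1)$, and it is in fact multiplicative, since $h(xy)=xy=h(x)h(y)$; hence it is in particular supermultiplicative, as the hypothesis requires. Moreover, for this $h$ the condition $f,g\in SX(h,I)$ is, by the remarks following Definition (\ref{106}), exactly the statement that $f,g$ are nonnegative ordinary convex functions, so inequality (\ref{222}) applies verbatim and yields a statement about ordinary convex functions.

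Next I would substitute $h(\alpha)=\alpha$ into the three integrals in the bracket of (\ref{222}). Writing $h(\alpha^{2})=\alpha^{2}$, $h(\alpha(1-\alpha))=\alpha(1-\alpha)$, and $h((1-\alpha)^{2})=(1-\alpha)^{2}$, the bracket becomes
\begin{equation*}
\int_{0}^{1}\alpha^{2}\,d\alpha+2\int_{0}^{1}\alpha(1-\alpha)\,d\alpha+\int_{0}^{1}(1-\alpha)^{2}\,d\alpha.
\end{equation*}
Each piece is a standard polynomial integral: $\int_{0}^{1}\alpha^{2}\,d\alpha=\tfrac13$, $\int_{0}^{1}\alpha(1-\alpha)\,d\alpha=\tfrac12-\tfrac13=\tfrac16$, and $\int_{0}^{1}(1-\alpha)^{2}\,d\alpha=\tfrac13$.

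Finally I would sum these with their coefficients, giving $\tfrac13+2\cdot\tfrac16+\tfrac13=1$. Since the bracket collapses to unity, inequality (\ref{222}) reduces precisely to
\begin{equation*}
\frac{1}{b-a}\int_{a}^{b}f(x)g(x)\,dx\leq\frac{M(a,b)+N(a,b)}{4},
\end{equation*}
which is the claimed corollary. There is no genuine obstacle in this argument; the only point deserving a moment's attention is confirming that $h(\alpha)=\alpha$ satisfies the supermultiplicativity hypothesis of Theorem \ref{th6}, which it does because this $h$ is multiplicative. The rest is the routine evaluation of three elementary integrals whose coefficient-weighted sum happens to equal $1$.
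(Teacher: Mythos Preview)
Your proposal is correct and follows exactly the approach the paper intends: the corollary is stated without a separate proof because it is the immediate specialization of (\ref{222}) at $h(\alpha)=\alpha$, and your verification that this $h$ is (super)multiplicative together with the elementary evaluation $\tfrac13+2\cdot\tfrac16+\tfrac13=1$ is precisely the routine computation required.
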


\begin{corollary}
\bigskip If in (\ref{222}) we take $h(\alpha )=\alpha ^{s}$, then we obtain
an integral inequality for $s-$convex functions in the second sense the with
use of the Beta function of Euler type\bigskip 
\begin{equation*}
\frac{1}{b-a}\int_{a}^{b}f\left( x\right) g\left( x\right) dx\leq \frac{%
M\left( a,b\right) +N\left( a,b\right) }{2}\left\{ \beta \left(
2s+1,1\right) +\beta \left( 2s+1,2s+1\right) \right\} .
\end{equation*}
\end{corollary}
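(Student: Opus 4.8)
The plan is to read the corollary off directly from the master inequality (\ref{222}) of Theorem \ref{th6} by inserting the admissible weight $h(\alpha)=\alpha^{s}$ with $s\in(0,1]$, which is positive and superadditive on $(0,1)$, and then expressing the three resulting one-dimensional integrals through the Euler Beta function $\beta(p,q)=\int_{0}^{1}t^{p-1}(1-t)^{q-1}\,dt$. Because (\ref{222}) is already established for an arbitrary such $h$, no new convexity, symmetry, or ordering argument is required; the whole task reduces to a substitution followed by the evaluation of definite integrals.

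First I would perform the substitution, using $h(\alpha^{2})=\alpha^{2s}$, $h(\alpha(1-\alpha))=\alpha^{s}(1-\alpha)^{s}$, and $h((1-\alpha)^{2})=(1-\alpha)^{2s}$, so that the bracketed factor on the right of (\ref{222}) becomes
\[
\int_{0}^{1}\alpha^{2s}\,d\alpha+2\int_{0}^{1}\alpha^{s}(1-\alpha)^{s}\,d\alpha+\int_{0}^{1}(1-\alpha)^{2s}\,d\alpha .
\]
Next, exactly as in the corollary to Theorem \ref{th5}, I would identify each integral with a Beta value: the first with $\beta(2s+1,1)$, the last with $\beta(1,2s+1)$, and the doubled middle term with $2\beta(2s+1,2s+1)$. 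Invoking the symmetry $\beta(p,q)=\beta(q,p)$ merges the two boundary terms into $2\beta(2s+1,1)$, so the bracket collapses to $2\{\beta(2s+1,1)+\beta(2s+1,2s+1)\}$. Substituting this back into (\ref{222}) and cancelling the factor $2$ against the coefficient $(M(a,b)+N(a,b))/4$ turns the prefactor into $(M(a,b)+N(a,b))/2$, which is precisely the asserted inequality.

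No genuine obstacle arises, since the analytic content is already carried by (\ref{222}); the only step demanding care is the correct matching of each integrand $\alpha^{2s}$, $\alpha^{s}(1-\alpha)^{s}$, $(1-\alpha)^{2s}$ with its Beta parameters, and the use of the symmetry of $\beta$ to recombine the two equal boundary integrals before factoring out the shared constant.
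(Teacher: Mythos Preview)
Your route---substitute $h(\alpha)=\alpha^{s}$ into (\ref{222}), rewrite the three integrals as Euler Beta values, and use $\beta(p,q)=\beta(q,p)$ to merge the two endpoint terms---is exactly the paper's (implicit) derivation of this corollary; nothing further is done there.

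One correction to your hypothesis check: Theorem~\ref{th6} requires $h$ to be \emph{supermultiplicative}, not superadditive. For $s\in(0,1]$ the map $\alpha\mapsto\alpha^{s}$ is in fact \emph{sub}additive on $(0,\infty)$, so your stated justification is false; however, $h(xy)=(xy)^{s}=x^{s}y^{s}=h(x)h(y)$, so $h$ is multiplicative and a fortiori supermultiplicative, and the hypothesis of Theorem~\ref{th6} is met for the right reason. As a side remark on the Beta identifications, the middle integral $\int_{0}^{1}\alpha^{s}(1-\alpha)^{s}\,d\alpha$ equals $\beta(s+1,s+1)$, not $\beta(2s+1,2s+1)$; the corollary as printed in the paper carries the same slip, and your write-up simply reproduces it.
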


\section{\protect\bigskip Applications to Some Special Means}

\bigskip We now consider the applications of our Theorems to the following
special means

The arithmetic mean: $A=A\left( a,b\right) :=\frac{a+b}{2},$\ \ $a,b\geq 0,$

The geometric mean: $G=G\left( a,b\right) :=\sqrt{ab},$ \ $a,b\geq 0,$

The quadratic mean: $K=K\left( a,b\right) :=\sqrt{\frac{a^{2}+b^{2}}{2}}$ \ $%
a,b\geq 0,$

The p-logarithmic mean:$L_{p}=L_{p}\left( a,b\right) :=\left\{ 
\begin{array}{l}
\left[ \frac{b^{p+1}-a^{p+1}}{\left( p+1\right) \left( b-a\right) }\right]
^{1/p}\text{ \ \ \ \ \ if \ \ }a\neq b \\ 
a\text{ \ \ \ \ \ \ \ \ \ \ \ \ \ \ \ \ \ \ \ \ \ \ if \ \ }a=b%
\end{array}%
\right. ,$ \ $\ \ \ p\in 
\mathbb{R}
\backslash \left\{ -1,0\right\} ;$ \ $a,b>0.$

\bigskip The following inequality is well known in the literature:%
\begin{equation*}
H\leq G\leq L\leq I\leq A\leq K
\end{equation*}

It is also known that $L_{p}$ is monotonically increasing over $p\in 
\mathbb{R}
,$ denoting $L_{1}=A,$ $L_{0}=I$ and $L_{-1}=L.$

The following propositions holds:\bigskip

\begin{proposition}
\bigskip Let $a,b\in 
\mathbb{R}
,$ $0<a<b$ and $n\in 
\mathbb{Z}
,$ $\left\vert n\right\vert \geq 1$. Then, we have:%
\begin{eqnarray}
&&\frac{4}{3}A\left( a^{n+1},b^{n+1}\right) +2G^{2}\left( a,b\right) A\left(
a^{n-1},b^{n-1}\right)  \notag \\
&\leq &L_{2n}^{2n}\left( a,b\right) +A\left( a^{2n},b^{2n}\right) .
\label{301}
\end{eqnarray}
\end{proposition}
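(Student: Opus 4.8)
The plan is to read off (\ref{301}) as a direct specialization of Theorem \ref{th1} --- concretely, of its corollary corresponding to the choice $h(t)=t$ --- applied to the functions $f(x)=g(x)=x^{n}$ on $I=[a,b]\subseteq(0,\infty)$.

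First I would check that this choice is admissible. For $0<a<b$ and an integer $n$ with $|n|\geq 1$ the function $x\mapsto x^{n}$ is nonnegative on $[a,b]$, and since $\frac{d^{2}}{dx^{2}}x^{n}=n(n-1)x^{n-2}\geq 0$ on $(0,\infty)$ (as $n(n-1)\geq 0$ whenever $n\geq 1$ or $n\leq -1$), it is convex there; hence $f,g\in SX(h,I)$ with $h(t)=t$. Moreover $h(t)=t$ is multiplicative, hence supermultiplicative, and satisfies $h(t)\geq t$; and $f=g$ forces $(f(x)-f(y))(g(x)-g(y))=(f(x)-f(y))^{2}\geq 0$, so $f$ and $g$ are similarly ordered. (The auxiliary hypothesis $f(x),g(x)\geq x$ of Theorem \ref{th1} is inessential once $h(t)=t$, since the corollary is exactly the statement for nonnegative convex functions; if one insists on it, it suffices to take $a\geq 1$ when $n\geq 1$ and $b\leq 1$ when $n\leq -1$.)

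Next I would substitute into the corollary, which reads $\frac{2a+3b}{6}(f(a)+g(a))+\frac{3a+2b}{6}(f(b)+g(b))\leq \frac{1}{b-a}\int_{a}^{b}(fg)(x)\,dx+\frac{M(a,b)}{2}$, and translate each term into means. On the left, $f(a)+g(a)=2a^{n}$ and $f(b)+g(b)=2b^{n}$, so the left side equals $\frac{1}{3}(a^{n}(2a+3b)+b^{n}(3a+2b))=\frac{2}{3}(a^{n+1}+b^{n+1})+ab(a^{n-1}+b^{n-1})$, which is precisely $\frac{4}{3}A(a^{n+1},b^{n+1})+2G^{2}(a,b)A(a^{n-1},b^{n-1})$. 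On the right, $(fg)(x)=x^{2n}$ gives $\frac{1}{b-a}\int_{a}^{b}x^{2n}\,dx=\frac{b^{2n+1}-a^{2n+1}}{(2n+1)(b-a)}=L_{2n}^{2n}(a,b)$ (legitimate since $2n\notin\{-1,0\}$ for $|n|\geq 1$), while $M(a,b)=f(a)g(a)+f(b)g(b)=a^{2n}+b^{2n}$, so $\frac{1}{2}M(a,b)=A(a^{2n},b^{2n})$. Assembling these identifications yields (\ref{301}).

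There is essentially no obstacle here: all the analytic content is already carried by Theorem \ref{th1}, and what remains is the elementary regrouping of $2a^{n+1}+3a^{n}b+3ab^{n}+2b^{n+1}$ into arithmetic/geometric-mean form together with the evaluation of a single power integral. The only point to keep an eye on is that $2n\neq -1,0$ so that $L_{2n}$ is defined, which holds automatically under the stated hypotheses.
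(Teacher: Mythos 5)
Your route is the same as the paper's: the entire published proof is the single sentence ``apply Theorem \ref{th1} with $f(x)=g(x)=x^{n}$ and $h(\alpha)=\alpha$,'' and your computations carrying out that specialization (the regrouping of the left side into $\frac{4}{3}A(a^{n+1},b^{n+1})+2G^{2}(a,b)A(a^{n-1},b^{n-1})$, the identification $\frac{1}{b-a}\int_{a}^{b}x^{2n}dx=L_{2n}^{2n}$, and $\tfrac12 M(a,b)=A(a^{2n},b^{2n})$) are all correct.

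The one substantive error is your claim that the hypothesis $(f,g)(x)\geq x$ of Theorem \ref{th1} is ``inessential once $h(t)=t$.'' It is essential: in the proof of Theorem \ref{th1} the lower bound (\ref{203}) is obtained precisely by replacing $f(\alpha x+\beta y)$ and $g(\alpha x+\beta y)$ by $\alpha x+\beta y$ and $h(\alpha)$ by $\alpha$, so the corollary with $h(t)=t$ inherits that hypothesis rather than collapsing to a standard statement about nonnegative convex functions (note the left side of (\ref{201}) mixes the variable $x$ with values of $f$ and $g$, which is only controlled through $f(x),g(x)\geq x$). Indeed the proposition fails without it: for $n=2$, $a=0.1$, $b=0.2$ the left side of (\ref{301}) is $0.012$ while the right side is $L_{4}^{4}(0.1,0.2)+A(0.1^{4},0.2^{4})=0.00062+0.00085=0.00147$. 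Your parenthetical fallback --- restrict to $a\geq 1$ when $n\geq 1$ and $b\leq 1$ when $n\leq -1$ so that $x^{n}\geq x$ on $[a,b]$ --- is exactly the hypothesis that must be added, and it should be promoted from an aside to a stated condition; the paper itself silently omits it and its Proposition is false as stated for general $0<a<b$.
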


\begin{proof}
If we apply Theorem \ref{th1} for $f(x)=g(x)=x^{n}$, $h\left( \alpha \right)
=\alpha $ where $x\in 
\mathbb{R}
,$ $n\in 
\mathbb{Z}
,$ $\left\vert n\right\vert \geq 1,$\ we get the proof (\ref{301}).
\end{proof}

\begin{proposition}
\bigskip Let $a,b\in 
\mathbb{R}
,$ $0<a<b$ and $n\in 
\mathbb{Z}
,$ $\left\vert n\right\vert \geq 1$. Then, we have:%
\begin{equation}
G^{2}\left( a^{n},b^{n}\right) \leq 2A\left( a^{2n},b^{2n}\right) .
\label{302}
\end{equation}
\end{proposition}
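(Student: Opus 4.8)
The plan is to obtain (\ref{302}) by specializing Theorem \ref{th2} to the pair $f(x)=g(x)=x^{n}$ on the interval $I=[a,b]$, with the weight $h(\alpha)=\alpha$, and then to rewrite the resulting estimate in terms of the means $A$ and $G$. First I would verify that this data satisfies all the hypotheses of Theorem \ref{th2}. Since $0<a<b$ and $|n|\ge 1$, the function $x\mapsto x^{n}$ is strictly positive on $[a,b]$ and convex there, because $(x^{n})''=n(n-1)x^{n-2}\ge 0$ (note $n(n-1)\ge 0$ whether $n\ge 1$ or $n\le -1$); hence, since $h(\alpha)=\alpha\ge\alpha$, the chain (\ref{111}) shows $x^{n}\in SX(h,I)$ for $h(\alpha)=\alpha$. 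Moreover $h(\alpha)=\alpha$ is superadditive (in fact additive) and satisfies $h(\alpha)\ge\alpha$, and $f=g=x^{n}$ are trivially similarly ordered, since $(f(x)-f(y))(g(x)-g(y))=(x^{n}-y^{n})^{2}\ge 0$ for all $x,y\in I$. Thus Theorem \ref{th2} applies.

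Next I would evaluate each term of (\ref{209}) for this choice. With $f=g=x^{n}$ one has $M(a,b)=f(a)g(a)+f(b)g(b)=a^{2n}+b^{2n}=2A(a^{2n},b^{2n})$ and $N(a,b)=f(a)g(b)+f(b)g(a)=2a^{n}b^{n}=2G^{2}(a^{n},b^{n})$, while on the right-hand side $h(1)=1$ and $\int_{0}^{1}h(\alpha)\,d\alpha=\int_{0}^{1}h(1-\alpha)\,d\alpha=\frac{1}{2}$. Substituting these into (\ref{209}) gives
\[
\frac{A(a^{2n},b^{2n})}{3}+\frac{2\,G^{2}(a^{n},b^{n})}{3}\le\frac{a^{2n}+b^{2n}}{2}=A(a^{2n},b^{2n}),
\]
and subtracting $\frac{1}{3}A(a^{2n},b^{2n})$ from both sides and then multiplying by $\frac{3}{2}$ yields $G^{2}(a^{n},b^{n})\le A(a^{2n},b^{2n})$.

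Finally, since $a,b>0$ forces $A(a^{2n},b^{2n})>0$, we conclude $G^{2}(a^{n},b^{n})\le A(a^{2n},b^{2n})\le 2A(a^{2n},b^{2n})$, which is exactly (\ref{302}). I do not expect any genuine obstacle: the only point needing a little care is the convexity of $x^{n}$ for negative $n$, which is precisely why the hypothesis $|n|\ge 1$ (equivalently $n(n-1)\ge 0$) is imposed; everything else is routine substitution of the definitions of $A$ and $G$, and the factor $2$ in (\ref{302}) is merely the slack left after discarding the positive term $\frac{1}{3}A(a^{2n},b^{2n})$ (indeed $G^{2}(a^{n},b^{n})\le A(a^{2n},b^{2n})$ is itself just the arithmetic--geometric mean inequality). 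One could equally well run the same substitution through Theorem \ref{th3}, which also collapses to $N(a,b)\le M(a,b)$, i.e.\ $2G^{2}(a^{n},b^{n})\le 2A(a^{2n},b^{2n})$, so the conclusion is insensitive to which of the two theorems one starts from.
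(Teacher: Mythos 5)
Your proposal is correct and follows exactly the route the paper takes: its proof is the single sentence that (\ref{302}) is immediate from Theorem \ref{th2} with $f(x)=g(x)=x^{n}$ and $h(\alpha)=\alpha$, and you have simply supplied the routine verifications and the substitution in full. Your observation that the substitution actually yields the sharper bound $G^{2}\left(a^{n},b^{n}\right)\leq A\left(a^{2n},b^{2n}\right)$, so that the factor $2$ in (\ref{302}) is pure slack, is accurate and goes slightly beyond what the paper records.
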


\begin{proof}
$\bigskip $The proof is immediate from Theorem \ref{th2} applied for $%
f(x)=g(x)=x^{n}$, $h\left( \alpha \right) =\alpha $ where $x\in 
\mathbb{R}
,$ $n\in 
\mathbb{Z}
,$ $\left\vert n\right\vert \geq 1$.
\end{proof}

\begin{proposition}
$\bigskip $Let $a,b\in 
\mathbb{R}
,$ $0<a<b.$ Then, we have:%
\begin{equation}
\frac{1}{G^{2}\left( a,b\right) }\leq K\left( a,b\right) .  \label{303}
\end{equation}
\end{proposition}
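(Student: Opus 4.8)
The plan is to obtain (\ref{303}) as a one-line specialization of one of the integral inequalities of Section~2, in exactly the same manner that (\ref{301}) and (\ref{302}) were deduced from Theorems \ref{th1} and \ref{th2}. Concretely, I would apply Theorem \ref{th5} with $f(x)=g(x)=\frac1x$ on $I=[a,b]\subset(0,\infty)$ and with the weight $h(\alpha)=\alpha$.

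First I would check that this data meets every hypothesis of Theorem \ref{th5}. The function $x\mapsto 1/x$ is convex and nonnegative on $(0,\infty)$ and $h(\alpha)=\alpha\geq\alpha$, so by the chain (\ref{111}) we have $f,g\in SX(h,I)$; the weight $h(\alpha)=\alpha$ satisfies $h(xy)=xy=h(x)h(y)$, hence is multiplicative and a fortiori supermultiplicative, so (\ref{107}) holds; and since $f=g$, the similarly-ordered condition (\ref{109}) is automatic. Thus Theorem \ref{th5} is applicable.

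Next I would evaluate the quantities appearing in (\ref{221}). For $h(\alpha)=\alpha$ the three integrals on the right of (\ref{221}) are $\int_0^1\alpha^2\,d\alpha=\frac13$, $\int_0^1(\alpha-\alpha^2)\,d\alpha=\frac16$, $\int_0^1(1-\alpha)^2\,d\alpha=\frac13$, so the bracket equals $1$. On the left, $f(\frac{a+b}{2})g(\frac{a+b}{2})=\frac{4}{(a+b)^2}$ while $M(a,b)=\frac1{a^2}+\frac1{b^2}$. Substituting these values and clearing denominators reduces (\ref{221}) to a polynomial inequality in $a$ and $b$; writing $ab=G^2(a,b)$, $a+b=2A(a,b)$ and $\frac12(a^2+b^2)=K^2(a,b)$ then re-expresses it as an inequality among the special means, which after rearrangement yields (\ref{303}).

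The step I expect to be the main obstacle is precisely this last bookkeeping: one must massage the resulting mean inequality so that it lands on the stated form $\frac{1}{G^2(a,b)}\leq K(a,b)$. Should the pair $(f,g)$ or the weight $h$ chosen above not produce exactly this expression, one adjusts the choice — for instance replacing $1/x$ by $1/\sqrt{x}$, or invoking Theorem \ref{th6} in place of Theorem \ref{th5} (in which case one must additionally verify the symmetry of $f$ and $g$ about $\frac{a+b}{2}$, possibly by taking a symmetrized function such as $(x(a+b-x))^{-1/2}$). Once the appropriate specialization has been pinned down, the remaining verification is entirely routine.
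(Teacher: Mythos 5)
Your plan diverges from the paper's at the very first decision: the paper obtains (\ref{303}) by specializing Theorem \ref{th2} (inequality (\ref{209})) to $f(x)=g(x)=\frac1x$ and $h(\alpha)=\alpha$, whereas you specialize Theorem \ref{th5}. That by itself would be acceptable, but the step you postpone as ``routine bookkeeping'' is exactly where the argument breaks, and it cannot be repaired by rearrangement. Carrying your computation to the end: with $h(\alpha)=\alpha$ the bracket in (\ref{221}) equals $1$ and the left-hand side equals $\frac{8a^2b^2}{(a+b)^2(a^2+b^2)}$, so (\ref{221}) reduces to $8G^4(a,b)\leq (2A(a,b))^2\cdot 2K^2(a,b)$, i.e.\ $G^2(a,b)\leq A(a,b)K(a,b)$. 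This is a homogeneous inequality in $(a,b)$, while (\ref{303}) compares a quantity of degree $-2$ (namely $1/(ab)$) with one of degree $+1$ (namely $K(a,b)$); no algebraic rearrangement of a homogeneous relation among $A$, $G$, $K$ can produce it. (Indeed (\ref{303}) as printed fails for $a=\frac1{10}$, $b=\frac15$, where $1/G^2=50$ while $K<1$.) Your closing paragraph concedes this possibility and proposes to ``adjust the choice'' of $f$, $g$, $h$ or of the theorem, but never exhibits a choice that works, so what you have is a search strategy rather than a proof.

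For comparison, the paper's own specialization of Theorem \ref{th2} gives, after evaluating the integrals, $\frac{M(a,b)}{6}+\frac{N(a,b)}{3}\leq\frac{M(a,b)}{2}$, i.e.\ $N(a,b)\leq M(a,b)$, which for $f=g=\frac1x$ reads $\frac{2}{ab}\leq\frac1{a^2}+\frac1{b^2}$, equivalently $G(a,b)\leq K(a,b)$, or $\frac1{G^2(a,b)}\leq K^2\left(\frac1a,\frac1b\right)$. So the paper's route also does not yield the displayed (\ref{303}) literally --- the proposition's statement appears to be mistranscribed --- but the specialization the paper intends is of Theorem \ref{th2}, not Theorem \ref{th5}, and the honest conclusion of either computation should be recorded as the correct homogeneous mean inequality rather than as (\ref{303}).
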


\begin{proof}
$\bigskip $The assertion follows from Theorem \ref{th2} applied to $%
f(x)=g(x)=\frac{1}{x}$, $x\in \left[ a,b\right] $ and $h\left( \alpha
\right) =\alpha .$\bigskip
\end{proof}

\begin{proposition}
\bigskip Let $a,b\in 
\mathbb{R}
,$ $0<a<b$ and $n\in 
\mathbb{Z}
,$ $\left\vert n\right\vert \geq 1$. Then, we have:%
\begin{equation}
G^{2}\left( a^{n},b^{n}\right) \leq A\left( a^{2n},b^{2n}\right)  \label{304}
\end{equation}
\end{proposition}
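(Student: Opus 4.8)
The plan is to obtain (\ref{304}) as a direct specialization of Theorem \ref{th3}, exactly in the spirit of the proofs of (\ref{301})--(\ref{303}). First I would take $f(x)=g(x)=x^{n}$ on $I=[a,b]$ with $0<a<b$ and choose $h(\alpha)=\alpha$, then check the hypotheses of Theorem \ref{th3} for this data. For $|n|\geq 1$ the map $x\mapsto x^{n}$ is nonnegative and convex on $(0,\infty)$ -- immediate when $n\geq 1$, and when $n\leq -1$ one writes $x^{n}=1/x^{|n|}$, again nonnegative and convex on $(0,\infty)$; since $0<a<b$ keeps $I$ away from the origin, the restriction to $I$ is a nonnegative convex function. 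By the Remark around (\ref{110})--(\ref{111}), applied with the admissible weight $h(\alpha)=\alpha$ (which satisfies $h(\alpha)\geq\alpha$ with equality), it follows that $f,g\in SX(h,I)$. Moreover $h(\alpha)=\alpha$ is nonnegative, superadditive (with equality), and satisfies $h(\alpha)\geq\alpha$; and since $f=g$ we have $(f(x)-f(y))(g(x)-g(y))=(x^{n}-y^{n})^{2}\geq 0$, so $f$ and $g$ are similarly ordered. Hence all assumptions of Theorem \ref{th3} are in force.

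Next I would substitute the ingredients of (\ref{215}). With $f=g=x^{n}$ one has $M(a,b)=a^{2n}+b^{2n}$ and $N(a,b)=2a^{n}b^{n}=2\,G^{2}(a^{n},b^{n})$, while $h(1)=1$ and $\int_{0}^{1}h(\alpha)\,d\alpha=\int_{0}^{1}h(1-\alpha)\,d\alpha=\int_{0}^{1}\alpha\,d\alpha=\frac{1}{2}$. Inserting these into (\ref{215}) gives
\[
\frac{a^{2n}+b^{2n}}{3}+\frac{2a^{n}b^{n}}{6}\ \leq\ \frac{a^{2n}+b^{2n}}{2}.
\]
Subtracting $\frac{1}{3}(a^{2n}+b^{2n})$ from both sides reduces this to $\frac{a^{n}b^{n}}{3}\leq\frac{a^{2n}+b^{2n}}{6}$, that is $2a^{n}b^{n}\leq a^{2n}+b^{2n}$; recognizing $a^{n}b^{n}=G^{2}(a^{n},b^{n})$ and $\frac{1}{2}(a^{2n}+b^{2n})=A(a^{2n},b^{2n})$ yields precisely (\ref{304}).

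I do not anticipate a genuine obstacle here: after the hypothesis check, the whole of (\ref{215}) collapses to the elementary inequality $2uv\leq u^{2}+v^{2}$ with $u=a^{n}$, $v=b^{n}$. The only point deserving care is confirming the membership $x^{n}\in SX(\alpha,[a,b])$ for every $n$ with $|n|\geq 1$ -- both signs -- which is exactly why I would invoke the Remark together with the observation that $[a,b]\subset(0,\infty)$. One may also note that the same specialization of Theorem \ref{th2} produces the identical sharp bound, so (\ref{304}) is just the tight form of the estimate recorded more loosely in (\ref{302}); routing the argument through Theorem \ref{th3} is chosen here only for variety.
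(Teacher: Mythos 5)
Your proposal is correct and follows essentially the same route as the paper: specialize Theorem \ref{th3} with $h(\alpha)=\alpha$ and read (\ref{304}) off from (\ref{215}) after computing $M(a,b)$, $N(a,b)$ and the integrals. The only discrepancy is that the paper's printed proof takes $f(x)=g(x)=\frac{1}{x}$ (apparently a slip carried over from the preceding proposition, which would only yield the case $n=-1$), whereas your choice $f(x)=g(x)=x^{n}$, justified for both signs of $n$ via convexity on $[a,b]\subset(0,\infty)$ and the Remark, is the one that actually delivers (\ref{304}) for all $\left\vert n\right\vert \geq 1$.
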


\begin{proof}
The assertion follows from Theorem \ref{th3} applied to $f(x)=g(x)=\frac{1}{x%
}$, $x\in \left[ a,b\right] $ and $h\left( \alpha \right) =\alpha .$\bigskip
\end{proof}

\begin{proposition}
$\bigskip $Let $a,b\in 
\mathbb{R}
,$ $0<a<b$ and $n\in 
\mathbb{Z}
,$ $\left\vert n\right\vert \geq 1$. Then, we have:%
\begin{equation}
L_{2n}^{2n}\left( a,b\right) \leq \frac{A\left( a^{2n},b^{2n}\right)
+G^{2}\left( a^{n},b^{n}\right) }{2}.  \label{305}
\end{equation}
\end{proposition}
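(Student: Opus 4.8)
The plan is to obtain (\ref{305}) as a direct specialisation of Theorem \ref{th6} (equivalently, of its corollary for ordinary convex functions, i.e. the case $h(t)=t$), taking $f(x)=g(x)=x^{n}$ on $I=[a,b]$ and $h(\alpha)=\alpha$. First I would record that the hypotheses hold: since $0<a<b$ the function $x\mapsto x^{n}$ is nonnegative on $[a,b]$, and for $n\in\mathbb{Z}$ with $\left\vert n\right\vert\geq 1$ (so $n\geq 1$ or $n\leq -1$) it is convex there, whence $f=g\in SX(h,I)$ with $h(\alpha)=\alpha$ by the Remark containing (\ref{111}); moreover $h(\alpha)=\alpha$ is multiplicative, hence supermultiplicative, and $f=g$ makes the pair trivially similarly ordered. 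Thus Theorem \ref{th6} gives
\begin{equation*}
\frac{1}{b-a}\int_{a}^{b}f(x)g(x)\,dx\leq\frac{M(a,b)+N(a,b)}{4}\Bigl[\int_{0}^{1}h(\alpha^{2})\,d\alpha+2\int_{0}^{1}h(\alpha(1-\alpha))\,d\alpha+\int_{0}^{1}h((1-\alpha)^{2})\,d\alpha\Bigr].
\end{equation*}

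Next I would evaluate the three ingredients for this choice. With $h(\alpha)=\alpha$ the bracket collapses to $\int_{0}^{1}\alpha^{2}\,d\alpha+2\int_{0}^{1}(\alpha-\alpha^{2})\,d\alpha+\int_{0}^{1}(1-\alpha)^{2}\,d\alpha=\tfrac{1}{3}+2\cdot\tfrac{1}{6}+\tfrac{1}{3}=1$. For $f(x)=g(x)=x^{n}$ one has $(fg)(x)=x^{2n}$, and since $\left\vert n\right\vert\geq 1$ forces $2n\notin\{-1,0\}$,
\begin{equation*}
\frac{1}{b-a}\int_{a}^{b}(fg)(x)\,dx=\frac{b^{2n+1}-a^{2n+1}}{(2n+1)(b-a)}=L_{2n}^{2n}(a,b).
\end{equation*}
Likewise $M(a,b)=f(a)g(a)+f(b)g(b)=a^{2n}+b^{2n}=2A(a^{2n},b^{2n})$ and $N(a,b)=f(a)g(b)+f(b)g(a)=2a^{n}b^{n}=2G^{2}(a^{n},b^{n})$, so that $\frac{M(a,b)+N(a,b)}{4}=\frac{A(a^{2n},b^{2n})+G^{2}(a^{n},b^{n})}{2}$. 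Inserting these identifications into the displayed inequality yields exactly (\ref{305}).

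I do not anticipate any genuine analytic obstacle, since all the work is already packaged in Theorem \ref{th6}; the steps needing care are the verification that $x^{n}$ meets the standing hypotheses of that theorem (this is precisely what forces $0<a<b$ and $\left\vert n\right\vert\geq 1$), the evaluation of the weight integrals at $h(\alpha)=\alpha$, and the bookkeeping that rewrites the mean value of $x^{2n}$ as $L_{2n}^{2n}(a,b)$ and $M,N$ in terms of $A$ and $G$. If a self-contained argument were preferred, one could instead rerun the short proof of Theorem \ref{th6} with $f(x)=g(x)=x^{n}$ and $h(\alpha)=\alpha$ in place, but quoting the theorem is the most economical route.
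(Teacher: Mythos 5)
Your route is the same one the paper takes: both proofs consist of substituting $f(x)=g(x)=x^{n}$ and $h(\alpha)=\alpha$ into Theorem \ref{th6} and simplifying, and your bookkeeping is all correct (the bracket of integrals equals $1$, $\frac{1}{b-a}\int_{a}^{b}x^{2n}\,dx=L_{2n}^{2n}(a,b)$, and $\frac{M(a,b)+N(a,b)}{4}=\frac{A(a^{2n},b^{2n})+G^{2}(a^{n},b^{n})}{2}$). The genuine gap is in the step ``Thus Theorem \ref{th6} gives\ldots'': you verify nonnegativity, convexity, supermultiplicativity of $h$, and similar ordering, but Theorem \ref{th6} does not hypothesize similar ordering --- it hypothesizes that $f$ and $g$ are \emph{symmetric about} $\frac{a+b}{2}$, and that is exactly what its proof uses to replace $\left[f(\alpha a+(1-\alpha)b)+f((1-\alpha)a+\alpha b)\right]\left[g(\alpha a+(1-\alpha)b)+g((1-\alpha)a+\alpha b)\right]$ by $4(fg)(\alpha a+(1-\alpha)b)$. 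The function $x\mapsto x^{n}$ is not symmetric about the midpoint of $[a,b]$, so the theorem does not apply.

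This is not a repairable technicality: the claimed inequality (\ref{305}) is false as stated. Take $a=1$, $b=2$, $n=1$: the left side is $L_{2}^{2}(1,2)=\frac{2^{3}-1^{3}}{3\cdot 1}=\frac{7}{3}$, while the right side is $\frac{A(1,4)+G^{2}(1,2)}{2}=\frac{5/2+2}{2}=\frac{9}{4}<\frac{7}{3}$. (Equivalently, the claimed bound equals $\left(\frac{a^{n}+b^{n}}{2}\right)^{2}$, which lies \emph{below} the Hadamard upper bound $A(a^{2n},b^{2n})$ by AM--GM, so it can undercut the true mean value of $x^{2n}$.) The paper's own proof is the identical one-line application of Theorem \ref{th6} and commits the same oversight, so you have faithfully reproduced the intended argument; but the unverified (and unverifiable) symmetry hypothesis is where it breaks, and no amount of extra care in the subsequent computations can rescue the conclusion.
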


\begin{proof}
\bigskip If we apply Theorem \ref{th6} for $f(x)=g(x)=x^{n}$, $h\left(
\alpha \right) =\alpha $ where $x\in 
\mathbb{R}
,$ $n\in 
\mathbb{Z}
,$ $\left\vert n\right\vert \geq 1,$\ we get the proof (\ref{305}).
\end{proof}

\begin{proposition}
\bigskip Let $a,b\in 
\mathbb{R}
,$ $0<a<b$ and $n\in 
\mathbb{Z}
,$ $\left\vert n\right\vert \geq 1$. Then, we have:%
\begin{equation}
G\left( a,b\right) \leq A\left( a,b\right)  \label{306}
\end{equation}
\end{proposition}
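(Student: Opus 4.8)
The plan is to specialize one of the product inequalities already proved---Theorem~\ref{th2} is the most convenient---to the functions $f(x)=g(x)=x$ on $I=[a,b]$ with weight $h(\alpha)=\alpha$. First I would verify that these choices satisfy the hypotheses of Theorem~\ref{th2}. Since $0<a<b$, the function $x\mapsto x$ is nonnegative on $[a,b]$ and convex, and $h(\alpha)=\alpha\geq\alpha$ for every $\alpha\in(0,1)$, so by the Remark in the Introduction $f=g\in SX(h,I)$; moreover $(f(x)-f(y))(g(x)-g(y))=(x-y)^{2}\geq 0$, so $f$ and $g$ are similarly ordered; and $h(\alpha)=\alpha$ is superadditive (with equality) and satisfies $h(\alpha)\geq\alpha$. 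Hence Theorem~\ref{th2} applies.

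Next I would evaluate both sides of (\ref{209}) for this data. One has $h(1)=1$ and $\int_{0}^{1}h(\alpha)\,d\alpha=\int_{0}^{1}h(1-\alpha)\,d\alpha=\frac{1}{2}$, so the right-hand side of (\ref{209}) reduces to $\frac{1}{2}(f(a)g(a)+f(b)g(b))=\frac{1}{2}M(a,b)$. With $f=g=x$ we have $M(a,b)=a^{2}+b^{2}$ and $N(a,b)=2ab$, so (\ref{209}) becomes $\frac{a^{2}+b^{2}}{6}+\frac{2ab}{3}\leq\frac{a^{2}+b^{2}}{2}$; clearing denominators, this is precisely $2ab\leq a^{2}+b^{2}$.

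It remains to rewrite $2ab\leq a^{2}+b^{2}$ in the asserted form. Adding $2ab$ to both sides gives $4ab\leq(a+b)^{2}$; since $a,b>0$, both sides are nonnegative, so taking square roots and dividing by $2$ yields $\sqrt{ab}\leq\frac{a+b}{2}$, that is, $G(a,b)\leq A(a,b)$. The integer $n$ appearing in the statement is immaterial to the conclusion; one could just as well invoke Theorem~\ref{th3} with the same substitutions, or specialize the inequality (\ref{304}) to $n=1$, and finish by the same elementary rearrangement. No serious difficulty arises here: the only points requiring a moment's care are the membership $f(x)=x\in SX(h,I)$ for $h(\alpha)=\alpha$ (which is exactly what the Remark provides) and the passage from the quadratic inequality to its square-root form.
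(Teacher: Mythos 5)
Your argument is correct, but it takes a different route from the paper. The paper proves (\ref{306}) by applying Theorem \ref{th6} to $f(x)=g(x)=\tfrac{1}{x}$ with $h(\alpha)=\alpha$: there the bracketed integral equals $1$, the left side is $\tfrac{1}{b-a}\int_a^b x^{-2}\,dx=\tfrac{1}{ab}$, and $M(a,b)+N(a,b)=\left(\tfrac1a+\tfrac1b\right)^2$, so the inequality collapses to $4ab\le (a+b)^2$. You instead specialize Theorem \ref{th2} to $f(x)=g(x)=x$, $h(\alpha)=\alpha$, and arrive at $2ab\le a^2+b^2$, which is the same elementary fact after adding $2ab$ to both sides. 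Both computations are valid, and your final rearrangement to $\sqrt{ab}\le\tfrac{a+b}{2}$ is careful about positivity. One point in favor of your route: Theorem \ref{th6} requires $f$ and $g$ to be symmetric about $\tfrac{a+b}{2}$, a hypothesis that $x\mapsto\tfrac1x$ does not actually satisfy on $[a,b]$, so the paper's own one-line derivation is on shakier ground than yours; Theorem \ref{th2} imposes only nonnegativity, $h$-convexity, similar ordering, and $h(\alpha)\ge\alpha$, all of which you verify explicitly for your choices. Your side remarks (that Theorem \ref{th3} or the case $n=1$ of (\ref{304}) would serve equally well, and that the integer $n$ in the statement is vacuous) are also accurate.
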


\begin{proof}
\bigskip If we apply Theorem \ref{th6} for $f(x)=g(x)=\frac{1}{x}$, $x\in %
\left[ a,b\right] $ and $h\left( \alpha \right) =\alpha ,$\ we get the proof
(\ref{306}).
\end{proof}

$\bigskip $

$\bigskip $


\begin{thebibliography}{99}
\bibitem{alzer} \label{alzer}Alzer, H. \textit{A superadditive property of
Hadamard's gamma function}, Abh. Math. Semin. Univ. Hambg., 79 (2009), 11-23.

\bibitem{WW} \label{bre1}Breckner, W.W. \textit{Stetigkeitsaussagen f%
\"{}%
ur eine Klasse verallgemeinerter\ konvexer funktionen in topologischen
linearen Raumen}, Pupl. Inst. Math., 23, 13--20, 1978.

\bibitem{WW2} \label{bre2}Breckner, W. W. \textit{Continuity of generalized
convex and generalized concave\ set-valued functions}, Rev Anal. Num%
\'{}%
er. Thkor. Approx., 22, 39--51, 1993.

\bibitem{SSD1} \label{dr1}Dragomir, S.S., Pe\v{c}ari\'{c}, J. and Persson,
L.E. \textit{Some inequalities of Hadamard type}, Soochow J.Math., 21,
335--241, 1995.

\bibitem{GOD} \label{god}Godunova, E.K., Levin, V.I. \textit{Neravenstva
dlja funkcii sirokogo klassa, soderzascego vypuklye, monotonnye i nekotorye
drugie vidy funkii}, Vycislitel. Mat. i. Fiz. Mezvuzov. Sb. Nauc. Trudov,
MGPI,Moskva, pp. 138--142, 1985.

\bibitem{hud} \label{hud}Hudzik, H., Maligranda, L. \textit{Some remarks on }%
$s-$\textit{convex functions},\ Aequationes Math., 48, 100--111, 1994.

\bibitem{Mit2} \label{mit2}Mitrinovi\'{c}, D.S., Pe\v{c}ari\'{c}, J. and
Fink, A.M.\ \textit{Classical and new inequalities in analysis},
KluwerAcademic, Dordrecht, 1993.

\bibitem{skala} \label{skala}Skala, H.J. \textit{On the characterization of
certain similarly ordered super-additive functionals,} Proceedings of the
American Mathematical Society, 126 (5) (1998), 1349-1353.

\bibitem{V} \label{var}Varo\v{s}anec, S. \textit{On }$h-$\textit{convexity},
J. Math. Anal. Appl., Volume 326, Issue 1, 303--311, 2007.

\bibitem{bom} \label{bom}Bombardelli, M., Varo\v{s}anec, S. \textit{%
Properties of }$\mathit{h-}$\textit{convex functions related to the
Hermite--Hadamard--Fej\'{e}r inequalities}, Comput. Math. Appl. 58 (2009)
1869--1877.

\bibitem{bura} \label{bura}Burai, P., Hazy, A. \textit{On approximately }$%
\mathit{h-}$\textit{convex functions,} J. Convex Anal. 18 (2) (2011)
447--454.

\bibitem{sari} \label{sari}Sar\i kaya, M.Z., Sa\u{g}lam, A., Y\i ld\i r\i m,
H. \textit{On some Hadamard-type inequalities for }$\mathit{h-}$\textit{%
convex functions,} J. Math. Inequal. 2 (3) (2008) 335--341.

\bibitem{sari2} \label{sari2}Sar\i kaya, M.Z., Set, E., \"{O}zdemir, M.E. 
\textit{On some new inequalities of Hadamard-type involving }$\mathit{h-}$%
\textit{convex functions}, Acta Math. Univ. Comenian LXXIX (2) (2010)
265--272.

\bibitem{oz} \label{oz}\"{O}zdemir, M.E., G\"{u}rb\"{u}z, M. and Akdemir,
A.O. \textit{Inequalities for }$h-$\textit{Convex Functions via Further
Properties}, RGMIA Research Report Collection Volume 14, article 22, 2011.
\end{thebibliography}
\end{document}